\newcommand{\labitem}[2]{%
\def\@itemlabel{\text{#1}\textnormal{)}}
\item
\def\@currentlabel{#1}\label{#2}}
\newcommand{\labitems}[2]{%
\def\@itemlabel{\text{#1}}
\text{\!\!#1\!\!}
\def\@currentlabel{#1}\label{#2}}
\newcommand{\labitemc}[2]{%
\def\@itemlabel{\textbf{Case} \text{#1})}
\item
\def\@currentlabel{#1)}\label{#2}}
\newtheoremstyle{mythm}
   {10pt}                   %Space above
   {10pt}                   %Space below
   {}          		    %Body font: original {\normalfont}
   {}                      %Indent amount
   {\normalfont\bfseries}  %Thm head font original {\normalfont\bfseries}
   {}                      %Punctuation after thm head original :
\newtheoremstyle{mythm2}
   {10pt}                   %Space above
   {10pt}                   %Space below
   {\itshape}          		    %Body font: original {\normalfont}
   {}                      %Indent amount
   {\normalfont\bfseries}  %Thm head font original {\normalfont\bfseries}
   {}                      %Punctuation after thm head original :
\newtheoremstyle{myex}
  {10pt}                   %Space above
   {10pt}                   %Space below
   {}          		    %Body font: original {\normalfont}
   {}                      %Indent amount
   {\normalfont\bfseries}  %Thm head font original {\normalfont\bfseries}
   {}                      %Punctuation after thm head original :		  
\theoremstyle{myex}
\newtheorem*{XxmpX}{Example} % \newtheorem establishes the object heading
\newenvironment{ex}    % this is the environment name for the input
  {%
   \pushQED{\qed}\begin{XxmpX}}
  {\popQED\end{XxmpX}}
\theoremstyle{mythm2}
\newtheorem{thm}{Theorem}[section]
\newtheorem{lem}[thm]{Lemma}
\theoremstyle{mythm}
\newtheorem{Rem}[thm]{Remark} % \newtheorem establishes the object heading
\popQED\end{Rem}}
\DeclareMathOperator{\re}{Re}
\DeclareMathOperator{\argmax}{argmax}
\DeclareMathOperator{\diag}{diag}
\newcommand{\mods}{\,\bmod\,}
\newcommand{\RR}{\mathbb{R}}
\newcommand{\CC}{\mathbb{C}}
\newcommand{\NN}{\mathbb{N}}
\newcommand{\x}{\mathbf{x}}
\newcommand{\cxx}{\mathbf{x}^{\widehat{\mathrm{II}}}}
\newcommand{\cx}{x^{\widehat{\mathrm{II}}}}
\newcommand{\jj}{^{(j)}}
\newcommand{\jjj}{^{(j+1)}}
\newcommand{\aaa}{\mathbf{a}}
\newcommand{\uu}{\mathbf{u}}
\newcommand{\vv}{\mathbf{v}}
\newcommand{\y}{\mathbf{y}}
\newcommand{\z}{\mathbf{z}}
\newcommand{\W}{\mathbf{W}}
\newcommand{\bfeta}{\boldsymbol{\eta}}
\newcommand{\eps}{\varepsilon}
\newcommand{\J}{\mathbf{J}}
\newcommand{\F}{\mathbf{F}}
\newcommand{\C}{\mathbf{C}_N^{\mathrm{II}}}
\newcommand{\BIGOP}[1]{\mathop{\mathchoice%
{\raise-0.22em\hbox{\huge $#1$}}%
{\raise-0.05em\hbox{\Large $#1$}}{\hbox{\large $#1$}}{#1}}}
\begin{document}	
%\raggedbottom
\title{Sparse Fast DCT for Vectors with One-block Support}
\author{Sina Bittens\thanks{University of Göttingen, Institute for Numerical and Applied Mathematics, Lotzestr. 16-18, 37083 Göttingen, 
Germany. Email: sina.bittens@mathematik.uni-goettingen.de} \and Gerlind Plonka\thanks{University of Göttingen, Institute for Numerical and Applied Mathematics, Lotzestr. 16-18, 37083 Göttingen, 
Germany. Email: plonka@math.uni-goettingen.de} }
\date{\today}
\maketitle
\begin{abstract}
 In this paper we present a new fast and deterministic algorithm for the inverse discrete cosine transform of type II that reconstructs the input vector $\mathbf{x}\in\RR^{N}$, $N=2^{J-1}$, with short support of length $m$ from its discrete cosine transform  $\mathbf{x}^{\widehat{\mathrm{II}}}=\C\x$.  
 The resulting algorithm has a runtime of $\mathcal{O}\left(m\log m\log \frac{2N}{m}\right)$ and requires $\mathcal{O}\left(m\log \frac{2N}{m}\right)$ samples of $\mathbf{x}^{\widehat{\mathrm{II}}}$.
 
 In order to derive this algorithm we also develop a new fast and deterministic inverse FFT algorithm that constructs the input vector $\y\in\RR^{2N}$ with reflected block support of block length $m$ from $\widehat{\y}$ with the same runtime and sampling complexities as our DCT algorithm.
\end{abstract}
\textbf{Keywords.}\quad discrete cosine transform, deterministic sparse fast DCT, sublinear sparse DCT, discrete Fourier transform, deterministic sparse FFT, sublinear sparse FFT
\newline
\textbf{AMS Subject Classification.}\quad 65T50, 42A38, 65Y20
\section{Introduction}

In recent years there has been an increased effort to develop deterministic sparse FFT algorithms that exploit a priori knowledge of the resulting vector to reduce the overall complexity. For example, if ${\mathbf x} \in {\mathbb C}^{N}$ is known to  possess $m$ significant entries,  or a short support of length $m$, the derived deterministic sparse FFT has only a sublinear arithmetical complexity.

There exist several approaches to obtain sublinear-time sparse FFT algorithms, mostly with different requirements on the supposed structure of the resulting vector. If the support in Fourier domain consists of $n$ blocks of length at most $m$, there is a deterministic sparse FFT algorithm with runtime $\mathcal{O}\left(\frac{mn^2\log m\log^4N}{\log^2n}\right)$, see \cite{bit_zha_iw}. Apart from other deterministic methods, see, e.g., \cite{akavia2014,bittens2016,christlieb2016multiscale,iwen,iwen_improved,plonka_smallsupp,plonka_nonneg,plonka_sparse,iwen2013improved}, there also exist randomized algorithms, some of which have an even smaller runtime, but whose outputs are only correct with a certain probability, usually below $90 \%$, see, e.g., \cite{hassanieh2012SODA,pawar}. For further methods we refer to the survey \cite{gilbert2014}.

To the best of our knowledge there exist no sparse fast algorithms for trigonometric transforms.
However, besides the DFT, the discrete cosine transform (DCT) is one of the most widely used algorithms in mathematics, engineering and data processing.  One reason for the interest in DCT algorithms is their capability  to decorrelate signals governed by Markov processes approaching  the statistically optimal Karhunen-Loeve transform (KLT), see \cite{RY90}.
The DCT is one of the major components  in data compression. As for the DFT there exist many different fast algorithms for the DCT, based either on FFT in complex arithmetic or directly on real arithmetic, see \cite{plonka_dct}.
One possible application of the sparse DCT  is the fast evaluation of polynomials in monomial form from sparse  expansions of Chebyshev polynomials, see, e.g., \cite{PPST}, Chapter 6. 
\medskip

In this paper we present a first sparse fast algorithm for the inverse DCT-II (or equivalently for the DCT-III), assuming that the resulting vector ${\x} \in {\mathbb R}^{N}$ possesses a one-block (or short) support of length $m < N$, where $m$ does not have to be known a priori. This algorithm has a runtime of ${\mathcal O}\left(m \log m  \log \frac{2N}{m}\right)$ and requires only $\mathcal{O}\left(m\log \frac{2N}{m}\right)$ samples.

\subsection{Notation and Problem Statement}\label{sec:notation}

Let $N=2^{J-1}$ with $J \geq2$. We say that a vector $\x= (x_{j})_{j=0}^{N-1}\in\RR^N$ possesses a \emph{one-block support} $S^{\x}$ \emph{of length} $m$ if 
\[
 x_j=0 \qquad \forall\, j\notin S^{\x}\coloneqq I_{\mu^{\x},\nu^\x} \coloneqq\left\{\mu^\x,\left(\mu^\x+1\right)\mods N,\dotsc,\nu^\x\right\},
\]
for some $\mu^\x\in\{0,\dotsc,N-1\}$ and $\nu^\x\coloneqq\left(\mu^\x+m-1\right)\mods N$, where we allow the support to be periodically wrapped  around $0$. The interval $S^{\x}\coloneqq I_{\mu^\x,\nu^\x}$ is called the \emph{support interval}, $\mu^\x$ the \emph{first support index} and $\nu^\x$ the \emph{last support index}.  
The support length $m$ is uniquely determined. If $m\leq N/2$, the first support index $\mu^\x$ is unique as well, which does not have to be the case for $m>N/2$. For example,
$\x = (1,0,0,0,1,0,0,0)^T$ has support length $m=5$, but the first support index $\mu^\x$ can be chosen to be $0$ or $4$.

Further, we consider  $\y \coloneqq(\x^T,(\J_N\x)^T)^T \in\RR^{2N}$ and say that $\y$ possesses a \emph{reflected block support of block length} $m$ if $\x\in\RR^N$ has a one-block support of length $m$. Here, $\J_{N} \in {\mathbb R}^{N \times N}$ denotes the counter identity, 
 \[
  \J_N\coloneqq\left(\delta_{k,\,N-1-l}\right)_{k,\,l=0}^{N-1}=
               \begin{pmatrix}
                0 & \dots & 0 & 1 \\                                                         
                0 & & 1 & 0 \\
                0 & \iddots & & \vdots \\
                1 & & 0 & 0 
               \end{pmatrix},
 \]
so the second half of $\y$ is the reflection of $\x$. 

For $N\in\NN$ the \emph{cosine matrix of type II} is defined as
\begin{equation*}
  \C\coloneqq\sqrt{\frac{2}{N}}\left(\eps_N(k)\cos\left(\frac{k(2l+1)\pi}{2N}\right)\right)_{k,\,l=0}^{N-1},
\end{equation*}
 where $\eps_N(k)\coloneqq \frac{1}{\sqrt{2}}$ for $k\equiv 0\mods N $ and $\eps_N(k)\coloneqq 1$ for $k\not\equiv 0\mods N $.
This matrix is orthogonal, i.e., $ \C \left(\C\right)^{T} = {\mathbf I}_{N}$, where ${\mathbf I}_{N}$ denotes  the identity matrix of size $N \times N$. 
The \emph{discrete cosine transform of type II (DCT-II)} of $\x\in\RR^N$ is given by
 \[
  \cxx\coloneqq\C\x.
 \]
The inverse DCT-II is equal to the \emph{discrete cosine transform of type III (DCT-III)} with transformation matrix ${\mathbf C}^{\mathrm{III}} \coloneqq \left(\C\right)^{T}$.
The \emph{Fourier matrix} of size $N \times N$ is defined as $\F_N\coloneqq\left(\omega_N^{kl}\right)_{k,\,l=0}^{N-1}$, where $\omega_N\coloneqq e^{\frac{-2\pi i}{N}}$ is an $N$th primitive root of unity. Then the \emph{discrete Fourier transform (DFT)} of $\x\in\CC^N$ is given by 
\[
 \widehat{\x}\coloneqq \F_N\x.
\]
The purpose of this paper is to derive a sparse fast DCT algorithm for computing $\x$ with (unknown) one-block support of length $m <N$ from its DCT-transformed vector $\cxx$ in sublinear time ${\mathcal O}\left(m \log m  \log \frac{2N}{m}\right)$. In particular, if $m$ approaches the vector length $N$, the proposed algorithm still has a runtime complexity of ${\mathcal O}(N \log N)$ that is also achieved by fast DCT algorithms for vectors with full support.
Our algorithm is truly deterministic and returns the correct vector $\x$ if we assume that the vector $\y=(\x^T,(\J_N\x)^T)^T \in {\mathbb R}^{2N}$ satisfies the condition that for each nonzero entry $y_{k} \neq 0$ of $\y$ we also have that 
\begin{equation}\label{eq:suppose}
 y_{k \mods 2^{j}}^{(j)} \coloneqq \sum_{l=0}^{2^{J-j}-1} y_{k+2^j l}  \neq 0 \qquad \forall\, j \in\{0, \ldots , J\}. 
\end{equation}
This assumption is for example satisfied if all nonzero entries of $\x$ are positive or all nonzero entries are negative, i.e., $\x\in\RR^N_{\geq0}$ or $\x\in\RR_{\leq0}^N$.
In practice, for noisy data, one has to ensure that, for a threshold $\eps>0$ depending on the noise level,
 \[
  \left|y\jj_{k\mods 2^j}\right|>\eps \qquad\forall\,j\in\{0,\dotsc,J\}.
 \]

\subsection {Outline of the Paper}
The algorithm presented in this paper is based on the fact that the DCT-II and the DFT are closely related. In Section \ref{sec:dct_dft} we recall that the DCT-II can be computed by applying the DFT to the vector $\y \coloneqq(\x^T,(\J_N\x)^T)^T \in\RR^{2N}$ of double length. We then derive a sparse fast DFT algorithm for vectors $\y$ with reflected block support, generalizing ideas in \cite{plonka_smallsupp} and \cite{plonka_nonneg}. This sparse FFT does not require a priori knowledge of the length $m$ of the two support blocks of $\y$. 

The vector $\y$ is computed using an iterative procedure. As in \cite{plonka_smallsupp,plonka_nonneg} we use the $2^{j}$-length periodizations $\y^{(j)}$ of $\y \in {\mathbb R}^{2^{J}}$ for $j\in\{0, \ldots , J-1\}$ and efficiently compute $\y^{(j+1)}$ exploiting that $\y^{(j)}$ is known. 
This approach requires a detailed investigation of the support properties of $\y^{(j+1)}$, depending on the support of $\y^{(j)}$. The corresponding observations are summarized in Section \ref{sec:support}.
We have to distinguish two different cases for the reconstruction of $\y^{(j+1)}$ from $\y^{(j)}$, namely that $\y^{(j)}$ possesses either a one-block support of length $m^{(j)} \leq 2m$ or a two-block support, where each block already has length $m$ and the second block is the reflection of the first.
We present two numerical procedures to treat these cases in Section \ref{sec:dft_procedures}.
Using these methods we develop the complete sparse FFT algorithm for computing $\y$ and prove its arithmetical complexity in Section \ref{sec:dft_alg}. Section \ref{sec:detection} is devoted to the stable, efficient detection of the support of the periodized vectors $\y^{(j+1)}$. The sparse DCT algorithm, presented in Section \ref{sec:dct_alg}, can now simply be derived from this special sparse FFT. 
Finally, we extensively test the new sparse FFT and  the sparse DCT algorithm with respect to runtime and stability for noisy input data in Section \ref{sec:numerics}. 

\section{DCT-II via DFT}\label{sec:dct_dft}
A fast DCT algorithm can be obtained by employing fast DFT algorithms. Recall that for $\x = \left(x_{k}\right)_{k=0}^{N-1} \in\RR^N$ we defined $\y = \left(y_{k}\right)_{k=0}^{2N-1}\in\RR^{2N}$ as $\y\coloneqq(\x^{T}, (\J_{N}\x)^{T})^{T}$ in Section \ref{sec:notation}, so $\y$ can be written as
\begin{equation}\label{eq:y}
 y_k\coloneqq\begin{cases}
              x_k, & k\in\{0,\dotsc,N-1\}, \\
              x_{N-1-k}, & k\in\{N,\dotsc,2N-1\}.
             \end{cases}
\end{equation}
The following lemma shows the close relation between $\cxx$ and $\widehat{\y}$, namely that $\cxx$ can be computed from $\widehat{\y}$ and vice versa, see \cite{RY90}.
\begin{lem}\label{lem:dctx_from_yhat}
 Let $\x\in\RR^N$ and $\y=(\x^T,(\J_N\x)^T)^T$. Then $\cxx = \left(\cx_{k}\right)_{k=0}^{N-1} = \C \x$ is given by
\begin{equation}\label{eq:xy}
  \cx_k=\frac{\eps_N(k)}{\sqrt{2N}}\omega^k_{4N}\cdot\widehat{y}_k, \qquad k\in\{0, \ldots , N-1\},
\end{equation}
 where $\widehat{\y} = \left(\widehat{y}_{k}\right)_{k=0}^{2N-1} = \F_{2N} \y$.
Conversely, $\widehat{\y}$  is completely determined by $\cxx$ via 
\begin{equation}\label{converse}
  \widehat{y}_k=\begin{cases}
                 \frac{\sqrt{2N}}{\eps_N(k)}\omega_{4N}^{-k}\cdot\cx_k, & k\in\{0,\dotsc,N-1\}, \\
                 0, & k=N, \\
                 -\frac{\sqrt{2N}}{\eps_N(2N-k)}\omega_{4N}^{-k}\cdot\cx_{2N-k}, & k\in\{N+1,\dotsc,2N-1\}.
                \end{cases}
\end{equation}
\end{lem}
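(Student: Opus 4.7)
The plan is to prove both directions of the equivalence by direct computation from the DFT definition, treating the three ranges of $k$ in (\ref{converse}) as the pieces that fall out of one unified cosine-sum formula.

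First I would write
\[
\widehat{y}_k = \sum_{l=0}^{2N-1} y_l \,\omega_{2N}^{kl} = \sum_{l=0}^{N-1} x_l \,\omega_{2N}^{kl} + \sum_{l=N}^{2N-1} x_{2N-1-l}\,\omega_{2N}^{kl},
\]
using the reflection structure of $\y$, and in the second sum substitute $l \mapsto 2N-1-l$. Pulling out the factor $\omega_{2N}^{k(2N-1)} = \omega_{2N}^{-k}$ and then extracting an overall $\omega_{4N}^{-k}$ by means of the identities $\omega_{4N}^{k}\omega_{2N}^{kl} = \omega_{4N}^{k(2l+1)}$ and $\omega_{4N}^{k}\omega_{2N}^{-k}\omega_{2N}^{-kl} = \omega_{4N}^{-k(2l+1)}$, the two sums pair up as complex conjugates. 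This collapses to the key identity
\[
\widehat{y}_k = 2\,\omega_{4N}^{-k} \sum_{l=0}^{N-1} x_l \cos\!\left(\frac{k(2l+1)\pi}{2N}\right), \qquad k\in\{0,\dotsc,2N-1\}.
\]

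Next I would compare this with the definition of $\C$ to read off the relation. For $k\in\{0,\dotsc,N-1\}$ the cosine sum equals $\tfrac{\sqrt{N/2}}{\eps_N(k)}\,\cx_k$, giving $\widehat{y}_k = \tfrac{\sqrt{2N}}{\eps_N(k)}\omega_{4N}^{-k}\cx_k$, which is equivalent to (\ref{eq:xy}) after using the orthogonality of $\C$ (so that the implication runs in both directions). For $k = N$ the cosine factor $\cos\!\bigl(\tfrac{(2l+1)\pi}{2}\bigr)$ vanishes for every integer $l$, yielding $\widehat{y}_N = 0$. For $k \in \{N+1,\dotsc,2N-1\}$ I would set $k' \coloneqq 2N - k \in \{1,\dotsc,N-1\}$ and exploit
\[
\cos\!\left(\frac{k(2l+1)\pi}{2N}\right) = \cos\!\left((2l+1)\pi - \frac{k'(2l+1)\pi}{2N}\right) = -\cos\!\left(\frac{k'(2l+1)\pi}{2N}\right),
\]
so the $k$-th entry becomes the negative of the $k'$-th one, giving exactly the third case of (\ref{converse}).

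The substantive content of the proof is really this single computation; the only mild obstacle is bookkeeping of the roots of unity (keeping $\omega_{2N}$ and $\omega_{4N}$ straight, and handling the $\omega_{2N}^{k(2N-1)} = \omega_{2N}^{-k}$ step) and treating the boundary case $k = N$ separately so that the weight $\eps_N$ is applied to the correct index. Everything else follows by direct algebraic manipulation.
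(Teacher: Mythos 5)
Your proof is correct and follows essentially the same computation as the paper's: pairing the two halves of $\y$ so that $\omega_{2N}^{kl}+\omega_{2N}^{-k(l+1)}=2\omega_{4N}^{-k}\cos\bigl(\tfrac{k(2l+1)\pi}{2N}\bigr)$, which is the paper's derivation of (\ref{eq:xy}) read in the opposite direction. The only organizational difference is that for $k\in\{N,\dotsc,2N-1\}$ you exploit the antisymmetry of the cosine about $k=N$ in your unified formula, where the paper instead uses the symmetry $\J_{2N}\y=\y$ in the Fourier domain; both are equally elementary, and your appeal to the orthogonality of $\C$ is unnecessary since the scalar relation between $\widehat{y}_k$ and $\cx_k$ inverts directly.
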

\begin{proof}
1.  Let $k\in\{0,\dotsc,N-1\}$. We find that
 \begin{align*}
  \cx_k&=\sqrt{\frac{2}{N}}\eps_N(k)\sum_{l=0}^{N-1}\cos\left(\frac{2\cdot k(2l+1)\pi}{2\cdot2N}\right)x_l 
  =\frac{\eps_N(k)}{\sqrt{2N}}\sum_{l=0}^{N-1}\left(\omega_{4N}^{k(2l+1)}+\omega_{4N}^{-k(2l+2-1)}\right)x_l \\
  &=\frac{\eps_N(k)}{\sqrt{2N}}\omega_{4N}^k\sum_{l=0}^{N-1}\left(\omega_{2N}^{kl}+\omega_{2N}^{k(2N-1-l)}\right)x_l \\
  &=\frac{\eps_N(k)}{\sqrt{2N}}\omega_{4N}^k\left(\sum_{l=0}^{N-1}\omega_{2N}^{kl}x_l+\sum_{l'=N}^{2N-1}\omega_{2N}^{kl'}x_{2N-1-l'}\right) 
  =\frac{\eps_N(k)}{\sqrt{2N}}\omega_{4N}^k\cdot\widehat{y}_k. \notag
 \end{align*} 
2. For $k\in\{0,\dotsc,N-1\}$  the claim in (\ref{converse}) follows directly from (\ref{eq:xy}). For $k\in\{N,\dotsc,2N-1\}$ note that by construction $\y$ is symmetric, 
\begin{equation}\label{eq:sym}
  \J_{2N}\y=\J_{2N}\begin{pmatrix}
                    \x \\
                    \J_N\x
                   \end{pmatrix}
                   =\begin{pmatrix}
                     \x \\
                     \J_N\x
                    \end{pmatrix}=\y,
\end{equation}
 which implies that
 \begin{align}\label{eq:useful}
  \widehat{y}_k=\left(\widehat{\J_{2N}\y}\right)_k&=\sum_{l'=0}^{2N-1}\omega_{2N}^{kl'}y_{2N-1-l'} %\notag \\
  =\sum_{l=0}^{2N-1}\omega_{2N}^{k(2N-1-l)}y_l 
%   & =\sum_{l=0}^{2N-1}\omega_{2N}^{-k(l+1)}y_l 
  =\omega_{2N}^{-k}\sum_{l=0}^{2N-1}\omega_{2N}^{-kl}y_l.
 \end{align}
If $k=N$, we obtain from (\ref{eq:useful}) that
 \[
  \widehat{y}_N=\omega_{2N}^{-N}\sum_{l=0}^{2N-1}\omega_{2N}^{-Nl}y_l
  =-\sum_{l=0}^{2N-1}\omega_{2N}^{Nl}y_l=-\widehat{y}_N,
 \]
 so $\widehat{y}_N=0$. If $k\in\{N+1,\dotsc,2N-1\}$, then $2N-k\in\{1,\dotsc,N-1\}$, and (\ref{eq:useful}) yields
 \begin{align*}
  \widehat{y}_k&=\omega_{2N}^{-k}\sum_{l=0}^{2N-1}\omega_{2N}^{-kl} y_l
  =\omega_{2N}^{-k}\sum_{l=0}^{2N-1}\omega_{2N}^{(2N-k)l}y_l \\
  &=\omega_{2N}^{-k}\cdot\widehat{y}_{2N-k} =-\frac{\sqrt{2N}}{\eps_N(2N-k)}\omega_{4N}^{-k}\cdot\cx_{2N-k}.
 \end{align*}
\end{proof}
 
\section{Support Properties of the Periodized Vectors}\label{sec:support}
We want to develop a deterministic sublinear-time DCT algorithm for reconstructing $\x \in {\mathbb R}^{N}$  with one-block support from $\cxx$. Using the close relation between the DFT and the DCT-II shown in Lemma \ref{lem:dctx_from_yhat}, this problem can be transformed into deriving a sparse FFT algorithm for reconstructing $\y = ( \x^{T}, (\J_{N} \x)^{T})^{T}$, which has a reflected block support, from $\widehat{\y}$. For this purpose we extend recent approaches in \cite{plonka_smallsupp,plonka_nonneg} for FFT reconstruction of vectors with one-block support to our setting. 

We assume that $\y$ satisfies (\ref{eq:suppose}) in order to avoid cancellation of nonzero entries in the iterative algorithm.
Similarly as in \cite{plonka_smallsupp,plonka_nonneg} we employ a divide-and-conquer technique to efficiently compute $\y$ from $\widehat{\y}$. 

Let $N\coloneqq2^{J-1}$ for some $J\geq2$. For $j\in\{0,\dotsc,J\}$ let the \emph{periodization} $\y^{(j)} \in\RR^{2^j}$ of $\y =(y_{k})_{k=0}^{2^{J}-1}$ be given as in (\ref{eq:suppose}), i.e.,
\begin{equation}\label{eq:per_sum}
  \y^{(j)}=\left(y_k^{(j)}\right)_{k=0}^{2^j-1} \coloneqq\left(\sum_{l=0}^{2^{J-j}-1}y_{k+2^jl}\right)_{k=0}^{2^j-1}.
\end{equation}
In particular, we observe that $\y^{(J)}=\y$,  $\y^{(0)}=\sum\limits_{k=0}^{2N-1}y_k$ and  
\begin{equation} \label{eq:per_el}  
 y^{(j)}_{k} = y_{k}^{(j+1)}+ y_{k+2^{j}}^{(j+1)}, \qquad k\in\left\{0, \ldots , 2^{j}-1\right\},
\end{equation}
for $j \in \{0, \ldots , J-1 \}$. 
The following lemma shows that the Fourier transform $\widehat{\y\jj}$ of $\y^{(j)}$ is obtained by equidistantly sampling $2^{j}$ entries of $\widehat{\y}$, and that the periodization $\y^{(j)}$ possesses a symmetry property analogous to the one of $\y$ in (\ref{eq:sym}).
\begin{lem}\label{lem:periodization}
Let $N=2^{J-1}$, $\y\in\RR^{2N}$ and $j\in\{0,\dotsc,J\}$. Then $\widehat{\y\jj}$ satisfies
\begin{equation}\label{eq:fourierj}
  \widehat{\y^{(j)}} \coloneqq \F_{2^{j}} \y^{(j)}=\left(\widehat{y}_{2^{J-j}k}\right)_{k=0}^{2^j-1}.
\end{equation}
 Further, $\y\jj$ is symmetric, i.e., 
\begin{equation}\label{eq:symmetry}  
 \y^{(j)} = \J_{2^{j}} \y^{(j)}. 
\end{equation}
\end{lem}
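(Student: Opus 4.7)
The plan is to prove the two assertions independently; both follow from fairly direct index manipulations, so no conceptual obstacle is expected.

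For (\ref{eq:fourierj}), I would start from the definition $\left(\widehat{\y^{(j)}}\right)_k = \sum_{n=0}^{2^j-1} \omega_{2^j}^{kn}\,y_n^{(j)}$, substitute (\ref{eq:per_sum}), and swap the two sums to obtain a double sum over $n\in\{0,\ldots,2^j-1\}$ and $l\in\{0,\ldots,2^{J-j}-1\}$. The key observation is that $\omega_{2^j}^{kn} = \omega_{2^J}^{2^{J-j}kn}$ and, since $\omega_{2^J}^{2^{J-j}k\cdot 2^j l} = \omega_{2^J}^{2^J kl}=1$, we may rewrite this as $\omega_{2^J}^{2^{J-j}k(n+2^j l)}$. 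The substitution $m \coloneqq n+2^j l$ is then a bijection between $\{0,\ldots,2^j-1\}\times\{0,\ldots,2^{J-j}-1\}$ and $\{0,\ldots,2^J-1\}$, so the double sum collapses to $\sum_{m=0}^{2^J-1}\omega_{2^J}^{2^{J-j}km}\,y_m = \widehat{y}_{2^{J-j}k}$, which gives (\ref{eq:fourierj}).

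For (\ref{eq:symmetry}), I would write out $\left(\J_{2^j}\y^{(j)}\right)_k = y^{(j)}_{2^j-1-k} = \sum_{l=0}^{2^{J-j}-1} y_{2^j-1-k+2^j l}$ using (\ref{eq:per_sum}), then invoke the symmetry $\y = \J_{2N}\y$ from (\ref{eq:sym}), which reads $y_n = y_{2^J-1-n}$. Applying this to each summand converts $y_{2^j-1-k+2^j l}$ into $y_{2^J - 2^j + k - 2^j l + 0} = y_{(2^{J-j}-1-l)\cdot 2^j + k}$, and the reindexing $l' \coloneqq 2^{J-j}-1-l$ turns the sum back into $\sum_{l'=0}^{2^{J-j}-1} y_{k+2^j l'} = y^{(j)}_k$.

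The only thing to be careful with is bookkeeping of the exponents, in particular verifying that $2^J-1-(2^j-1-k+2^j l) = k + 2^j(2^{J-j}-1-l)$, which is the single identity that powers the second part. Neither part requires the assumption (\ref{eq:suppose}); both are purely algebraic properties of the periodization and of the reflected structure of $\y$.
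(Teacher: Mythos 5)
Your proposal is correct, and both computations check out (in particular the key index identity $2^J-1-(2^j-1-k+2^jl)=k+2^j(2^{J-j}-1-l)$ is right, and all indices stay in range so no modular reduction is needed). You take a mildly different route from the paper on both halves. For (\ref{eq:fourierj}) the paper does not reprove anything: it simply cites Lemma 2.1 of \cite{plonka_smallsupp}, whereas you give the standard direct derivation (swap the sums, absorb the $2^jl$ shift into the exponent since $\omega_{2^J}^{2^{J-j}k\cdot 2^jl}=1$, and collapse via the bijection $(n,l)\mapsto n+2^jl$); your version is self-contained at the cost of a few lines. For (\ref{eq:symmetry}) the paper argues by downward induction on $j$, using the one-step relation $y_k^{(j)}=y_k^{(j+1)}+y_{k+2^j}^{(j+1)}$ together with the symmetry of $\y^{(j+1)}$, starting from $\y=\J_{2N}\y$; you instead apply $y_n=y_{2^J-1-n}$ directly inside the full periodization sum (\ref{eq:per_sum}) and reindex, which proves all $j$ at once without induction. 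The two arguments are equivalent in substance, and you are right that neither half uses assumption (\ref{eq:suppose}); the only implicit hypothesis, shared by both proofs, is that $\y$ has the reflected structure $\y=(\x^T,(\J_N\x)^T)^T$ so that (\ref{eq:sym}) holds.
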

\begin{proof} 
In \cite{plonka_smallsupp}, Lemma 2.1, (\ref{eq:fourierj}) was already shown.
By (\ref{eq:sym}) we have that $y_{k} = y_{2^{J}-1-k}$ for $k \in \left\{0, \ldots ,2^{J}-1\right \}$. It follows for the entries of $\y^{(J-1)}$ that 
\[
 y_{k}^{(J-1)} = y_{k} + y_{k+2^{J-1}} = y_{2^{J}-1-k} + y_{2^{J}-1-(k+2^{J-1}) } 
=  y_{2^{J-1}-1-k}^{(J-1)} 
\]
for all $k\in\left\{0,\dotsc,2^{J-1}-1\right\}$; thus $\y^{(J-1)} = \J_{2^{J-1}} \y^{(J-1)}$. For $j< J-1$ (\ref{eq:symmetry}) can be proven inductively with the same argument.
\end{proof}

Lemma 2.2 in \cite{plonka_smallsupp} shows how the Fourier transform of a vector $\uu \in {\mathbb R}^{2^{j+1}}$, $j\ge 0$, changes if its entries are cyclically shifted by $2^j$.
\begin{lem}\label{lem:shift}
Let $\uu\in\RR^{2^{j+1}}$, $j\geq0$, and let the shifted vector $\uu^1 \coloneqq \left(u_{k}^{1}\right)_{k=0}^{2^{j+1}-1}\in\RR^{2^{j+1}}$ be given by 
 \[
  u^1_{k} \coloneqq u_{\left(k+2^j\right)\mods 2^{j+1}}, \qquad k\in\left\{0,\dotsc,2^{j+1}-1\right\}.
 \]
Then $\widehat{\uu^1}$ satisfies
 \[
  \widehat{u^1}_k=(-1)^{k}\widehat{u}_k \qquad\forall\, k\in\left\{0,\dotsc,2^{j+1}-1\right\}.
 \]
\end{lem}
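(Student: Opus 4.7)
The plan is to use the standard cyclic-shift identity for the DFT and then identify the relevant root of unity. Writing out the definition of the DFT of $\uu^1$, we have
\[
 \widehat{u^1}_k = \sum_{l=0}^{2^{j+1}-1} \omega_{2^{j+1}}^{kl}\, u^1_l = \sum_{l=0}^{2^{j+1}-1} \omega_{2^{j+1}}^{kl}\, u_{(l+2^j)\mods 2^{j+1}}.
\]
First I would perform the index substitution $l' \coloneqq (l+2^j)\mods 2^{j+1}$. Since both $\omega_{2^{j+1}}^{k(\cdot)}$ and the range of summation are $2^{j+1}$-periodic, the substitution leaves the summation range as $\{0,\dotsc,2^{j+1}-1\}$ and replaces $\omega_{2^{j+1}}^{kl}$ by $\omega_{2^{j+1}}^{k(l'-2^j)}$, so I can factor out $\omega_{2^{j+1}}^{-k\cdot 2^j}$ from the sum.

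The second step is to recognize what the prefactor is. By definition $\omega_{2^{j+1}} = e^{-2\pi i/2^{j+1}}$, hence
\[
 \omega_{2^{j+1}}^{-k\cdot 2^j} = e^{2\pi i\cdot k\cdot 2^j/2^{j+1}} = e^{\pi i k} = (-1)^k,
\]
and the remaining sum is exactly $\widehat{u}_k$, which yields the claim.

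There is no real obstacle here; the only thing to be careful about is justifying that reindexing by a cyclic shift of $2^j$ in the summation index does not change the sum, which follows from the $2^{j+1}$-periodicity of $l\mapsto\omega_{2^{j+1}}^{kl}$ in $l$. This lemma is the analogue of Lemma 2.2 in \cite{plonka_smallsupp} and the argument is essentially a one-line calculation once the substitution is made.
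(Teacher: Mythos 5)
Your proof is correct: the reindexing $l'=(l+2^j)\bmod 2^{j+1}$ is justified by the $2^{j+1}$-periodicity of $l\mapsto\omega_{2^{j+1}}^{kl}$, and the prefactor $\omega_{2^{j+1}}^{-k\cdot 2^j}=(-1)^k$ is computed correctly. The paper itself gives no proof, simply citing Lemma 2.2 of \cite{plonka_smallsupp}, and your one-line shift-theorem calculation is exactly the standard argument that citation refers to.
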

Our goal is to reconstruct $\y$ from $\widehat{\y}$ by successively computing its periodizations $\y^{(0)}$, $\y^{(1)},\dotsc,\y^{(J)}=\y$.  In the $j$th iteration step of the procedure we have to determine $\y^{(j+1)}$ from $\widehat{\y\jjj}$ efficiently, which can be done by employing the vector $\y\jj$ known from the previous step. In order for this approach to work we need to investigate how the support blocks of $\y\jjj$ can look like if the support of $\y\jj$ is given.

We start by inspecting the support properties of $\y^{(J)} = \left(\x^{T}, (\J_N \x)^{T}\right)^{T} \in {\mathbb R}^{2N}$.
\begin{lem}\label{lem:supporty}
Let $\x \in {\mathbb R}^{N}$ with $N=2^{J-1}$ have the one-block support $S^{\x}=I_{\mu^\x,\nu^\x}$ of length $m<N$. Set $\y= \y^{(J)} = (\x^{T}, (\J_N \x)^{T})^{T} \in {\mathbb R}^{2N}$ and assume that $\y$ satisfies $(\ref{eq:suppose})$.
\begin{enumerate}[label=\upshape\roman*)]
\item\label{item:supporty_i} If $\mu^\x \le \nu^\x$, $\y$ has the (reflected) two-block support $S^{(J)}=I_{\mu^\x,\nu^\x}\cup I_{2N-1-\nu^\x, 2N-1-\mu^\x}$ with two blocks of the same length $m=\nu^\x-\mu^\x+1$.
In the special cases $\mu^\x=0$ or $\nu^\x=N-1$, these two support blocks are adjacent and form a (reflected) one-block support, namely $S^{(J)}=I_{2N-1-\nu^\x, \nu^\x}$ for $\mu^\x=0$ and $S^{(J)}=I_{\mu^\x,2N-1-\mu^\x}$ for $\nu^\x=N-1$.
\item\label{item:supporty_ii} If $\mu^\x > \nu^\x$, then $\y$ has the (reflected) two-block support $S^{(J)}=I_{\mu^\x, 2N-1-\mu^\x} \cup I_{2N-1-\nu^\x, \nu^\x}$, where the two separated support blocks have the possibly different lengths $2(N-\mu^\x)$ and $2(\nu^\x+1)$, which sum up to $2m=2\left(N-\mu^\x+\nu^\x+1\right)$.
\end{enumerate}
%In the special case $m=N$, $\y$ has the full support $2m$.
\end{lem}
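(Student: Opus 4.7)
The plan is to prove both parts by directly reading off the support of $\y=(\x^T,(\J_N\x)^T)^T$ from that of $\x$, using the explicit componentwise description
\[
 y_k=\begin{cases} x_k, & 0\le k\le N-1,\\ x_{2N-1-k}, & N\le k\le 2N-1.\end{cases}
\]
The condition (\ref{eq:suppose}) is only needed so that no cancellation occurs; it plays no role for this lemma, because here we are simply describing which entries of $\y$ are (possibly) nonzero once the support of $\x$ is given. Throughout, I will index $\y$ modulo $2N$, so that cyclically wrapped intervals $I_{\mu,\nu}$ are handled in the same formalism as already introduced in Section \ref{sec:notation}.

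For part \ref{item:supporty_i}, i.e.\ $\mu^\x\le\nu^\x$, the first half of $\y$ is nonzero exactly on $I_{\mu^\x,\nu^\x}$, while $y_{N+k}=x_{N-1-k}\ne 0$ iff $N-1-k\in\{\mu^\x,\dotsc,\nu^\x\}$, which translates to $N+k\in\{2N-1-\nu^\x,\dotsc,2N-1-\mu^\x\}$. Both blocks have the same length $m=\nu^\x-\mu^\x+1$. Next I would check when these two intervals are adjacent or wrap: if $\nu^\x=N-1$ then the second block starts at $N$ and the two concatenate into $I_{\mu^\x,2N-1-\mu^\x}$; if $\mu^\x=0$ then the second block ends at $2N-1$ and together with the first they form, modulo $2N$, the cyclic interval $I_{2N-1-\nu^\x,\nu^\x}$. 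Otherwise $1\le\mu^\x$ and $\nu^\x\le N-2$, and one verifies $\nu^\x<2N-1-\nu^\x$ and $2N-1-\mu^\x<2N$ together with $\mu^\x>0$, so the two blocks are properly separated, giving the stated two-block support.

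For part \ref{item:supporty_ii}, i.e.\ $\mu^\x>\nu^\x$, the support of $\x$ is the cyclic interval $\{\mu^\x,\dotsc,N-1\}\cup\{0,\dotsc,\nu^\x\}$ of length $m=N-\mu^\x+\nu^\x+1$. Reading off the two halves of $\y$ as above produces the four (a priori) index sets
\[
 \{0,\dotsc,\nu^\x\},\quad\{\mu^\x,\dotsc,N-1\},\quad\{N,\dotsc,2N-1-\mu^\x\},\quad\{2N-1-\nu^\x,\dotsc,2N-1\}.
\]
The second and third are adjacent across the index $N$ and merge into $I_{\mu^\x,2N-1-\mu^\x}$ of length $2(N-\mu^\x)$; the fourth and first merge across $2N\equiv 0$ into the cyclic interval $I_{2N-1-\nu^\x,\nu^\x}$ of length $2(\nu^\x+1)$. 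That the two resulting blocks are separated follows from $m<N$, which gives $\nu^\x+1<\mu^\x$ and hence $2N-1-\mu^\x<2N-1-\nu^\x$ as well as $\nu^\x<\mu^\x$, preventing any further merging. The length count is $2(N-\mu^\x)+2(\nu^\x+1)=2m$.

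The only real bookkeeping burden, and therefore the main place where I would be careful, is the modular index arithmetic in distinguishing the non-wrapping from the wrapping subcases in part \ref{item:supporty_i} and in confirming in part \ref{item:supporty_ii} that exactly two merges occur (not three, not one). Everything else is an immediate translation of the definition of $\J_N\x$ into index sets.
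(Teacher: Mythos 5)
Your proof is correct and takes the same route as the paper, which simply states that the lemma ``follows directly from the definition of $\y$'' and adds only the remark that $m<N$ rules out $\mu^{\x}=\nu^{\x}+1$, so the two blocks in part \ref{item:supporty_ii} are always separated. Your writeup just carries out this index bookkeeping explicitly (and correctly, including the observation that the second half of $\y$ is $y_k=x_{2N-1-k}$ and that condition (\ref{eq:suppose}) is not needed here).
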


The proof of Lemma \ref{lem:supporty} follows directly from the definition of $\y$. Note that since $m<N$, $\mu^\x = \nu^\x+1$ is not possible in \ref{item:supporty_ii}. Therefore, the two support blocks in \ref{item:supporty_ii} are always separated. For algorithmic purposes we then denote the \emph{first index of the block centered around the middle of the vector} by $\mu^{(J)}$ and the \emph{first index of the block centered around the boundary of the vector} by $\eta^{(J)}$. See Figure \ref{fig2} (left) for a visualization. 

\def\a{2.4}
\def\b{\a/4}
\def\d{0.1}
\def\dd{3/4*\d}
\begin{figure}[!ht]
\tiny
\begin{alignat*}{4}
&\y^{(j)} \quad 
&& \begin{tikzpicture}[baseline=-0.75ex]
       \tikzset{
    brace/.style={decoration={brace},decorate},
    every pin edge/.style={thin}
 }
 \draw (0,0) -- (\a,0);
 \draw (0,\d) -- (0,-\d) node[below] {0};
 \draw (\a,\d) -- (\a,-\d) node[below] {$2^j-1$} ;
 \draw (\a/2,\d) -- (\a/2,-\d) node[below] {$2^{j-1}-1$};
 \node (1start) at (\a/2-3/4*\b,3/4*\d) {};
 \node (1end) at (\a/2+3/4*\b,3/4*\d) {};
 \draw [brace] (1start.north) -- node [pos=0.5, above] {$m^{(j)}$} (1end.north);  
 \draw[blue, thick, pattern=north east lines, pattern color=blue] (\a/2-3/4*\b,3/4*\d) -- (\a/2-3/4*\b,-3/4*\d) -- (\a/2+1/4*\b,-3/4*\d) -- (\a/2+1/4*\b,3/4*\d) -- (\a/2-3/4*\b,3/4*\d);
 \draw[orange, thick, pattern=north west lines, pattern color=orange] (\a/2-1/4*\b,-3/4*\d) -- (\a/2-1/4*\b,3/4*\d) -- (\a/2+3/4*\b,3/4*\d) -- (\a/2+3/4*\b,-3/4*\d) -- (\a/2-1/4*\b,-3/4*\d);
 \node [inner sep=3pt,pin={[inner sep=2pt, pin distance=0.10cm]245:$\mu\jj$}] at (\a/2-3/4*\b,-3/4*\d) {};
\end{tikzpicture} 
\qquad
&&\y^{(j)} \quad 
&& \begin{tikzpicture}[baseline=-0.75ex]
\tikzset{every pin edge/.style={thin}}
 \draw (0,0) -- (\a,0);
 \draw (0,\d) -- (0,-\d) node[below] {0};
 \draw (\a,\d) -- (\a,-\d) node[below] {$2^j-1$} ;
 \draw (\a/2,\d) -- (\a/2,-\d) node[below] {$2^{j-1}-1$};
 \draw[blue, thick, pattern=north east lines, pattern color=blue] (\a-3/4*\b,3/4*\d) -- (\a-3/4*\b,-3/4*\d) -- (\a,-3/4*\d) -- (\a,3/4*\d) -- (\a-3/4*\b,3/4*\d);
 \draw[blue, thick, pattern=north east lines, pattern color=blue] (0,3/4*\d) -- (0,-3/4*\d) -- (1/4*\b,-3/4*\d) -- (1/4*\b,3/4*\d) -- (0,3/4*\d);
 \draw[orange, thick, pattern=north west lines, pattern color=orange] (\a-1/4*\b,-3/4*\d) -- (\a-1/4*\b,3/4*\d) -- (\a,3/4*\d) -- (\a,-3/4*\d) -- (\a-1/4*\b,-3/4*\d);
 \draw[orange, thick, pattern=north west lines, pattern color=orange] (0,-3/4*\d) -- (0,3/4*\d) -- (3/4*\b,3/4*\d) -- (3/4*\b,-3/4*\d) -- (0,-3/4*\d);
 \node [inner sep=3pt,pin={[inner sep=2pt, pin distance=0.15cm]270:$\mu^{(j)}$}] at (\a-3/4*\b,-3/4*\d) {};
\end{tikzpicture} \\
& \y^{(j+1)} \quad 
&& \begin{tikzpicture}[baseline=-0.75ex]
        \tikzset{
    brace/.style={decoration={brace},decorate},
    every pin edge/.style={thin}
 }
 \draw (0,0) -- (2*\a,0);
 \draw (0,\d) -- (0,-\d) node[below] {0};
 \draw (2*\a,\d) -- (2*\a,-\d) node[below] {$2^{j+1}-1$};
 \draw (\a/2,\d) -- (\a/2,-\d);
 \draw (\a,\d) -- (\a,-\d) node[below] {$2^j-1$};
 \draw (3*\a/2,\d) -- (3*\a/2,-\d);
 \node (1start) at (\a/2-3/4*\b,3/4*\d) {};
 \node (1end) at (\a/2+1/4*\b,3/4*\d) {};  
 \node (2start) at (\a+\a/2-1/4*\b,3/4*\d) {};
 \node (2end) at (\a+\a/2+3/4*\b,3/4*\d) {};
 \draw [brace] (1start.north) -- node [pos=0.5, above] { $m$} (1end.north);
 \draw [brace] (2start.north) -- (2end.north) node [pos=0.5, above] {$m$};
 \draw[blue, thick, pattern=north east lines, pattern color=blue] (\a/2-3/4*\b,3/4*\d) -- (\a/2-3/4*\b,-3/4*\d) -- (\a/2+1/4*\b,-3/4*\d) -- (\a/2+1/4*\b,\dd) -- (\a/2-3/4*\b,\dd); 
 \draw[orange, thick, pattern=north west lines, pattern color=orange] (\a+\a/2-1/4*\b,-\dd)  -- (\a+\a/2-1/4*\b,\dd) -- (\a+\a/2+3/4*\b,\dd) -- (\a+\a/2+3/4*\b,-\dd) -- (\a+\a/2-1/4*\b,-\dd);
 \node [inner sep=3pt,pin={[inner sep=2pt, pin distance=0.15cm]270:$\mu\jjj=\mu\jj$}] at (\a/2-3/4*\b,-\dd) {};
\end{tikzpicture}
\qquad
&& \y^{(j+1)} \quad
&& \begin{tikzpicture}[baseline=-0.75ex]
        \tikzset{
    brace/.style={decoration={brace},decorate},
    every pin edge/.style={thin}
 }
 \draw (0,0) -- (2*\a,0);
 \draw (0,\d) -- (0,-\d) node[below] {0};
 \draw (2*\a,\d) -- (2*\a,-\d) node[below] {$2^{j+1}-1$};
 \draw (\a/2,\d) -- (\a/2,-\d);
 \draw (\a,\d) -- (\a,-\d) node[below] {$2^j-1$};
 \draw (3*\a/2,\d) -- (3*\a/2,-\d);
 \draw[blue, thick, pattern=north east lines, pattern color=blue] (\a-3/4*\b,\dd) -- (\a-3/4*\b,-\dd) -- (\a+1/4*\b,-\dd) -- (\a+1/4*\b,\dd) -- (\a-3/4*\b,\dd); 
 \draw[orange, thick, pattern=north west lines, pattern color=orange] (\a-1/4*\b,-\dd) -- (\a-1/4*\b,\dd) -- (\a+3/4*\b,\dd) -- (\a+3/4*\b,-\dd) -- (\a-1/4*\b,-\dd);
 \node [inner sep=3pt,pin={[pin distance=0.1cm]210:$\mu\jjj=\mu^{(j)}$}] at (\a-3/4*\b,-\dd) {};
\end{tikzpicture} \\
 \text{or} \quad &\y^{(j+1)} \quad
&& \begin{tikzpicture}[baseline=-0.75ex]
        \tikzset{
    brace/.style={decoration={brace},decorate},
    every pin edge/.style={thin}
 }
 \draw (0,0) -- (2*\a,0);
 \draw (0,\d) -- (0,-\d) node[below] {0};
 \draw (2*\a,\d) -- (2*\a,-\d) node[below] {$2^{j+1}-1$};
 \draw (\a/2,\d) -- (\a/2,-\d);
 \draw (\a,\d) -- (\a,-\d) node[below] {$2^j-1$};
 \draw (3*\a/2,\d) -- (3*\a/2,-\d);
 \draw[blue, thick, pattern=north east lines, pattern color=blue] (\a+\a/2-3/4*\b,\dd) -- (\a+\a/2-3/4*\b,-\dd) -- (\a+\a/2+1/4*\b,-\dd) -- (\a+\a/2+1/4*\b,\dd) -- (\a+\a/2-3/4*\b,\dd); 
 \draw[orange, thick, pattern=north west lines, pattern color=orange] (\a/2-1/4*\b,-\dd) -- (\a/2-1/4*\b,\dd) -- (\a/2+3/4*\b,\dd) -- (\a/2+3/4*\b,-\dd) -- (\a/2-1/4*\b,-\dd);
 \node [inner sep=3pt,pin={[inner sep=2pt, pin distance=0.15cm]270:$2^j+\mu^{(j)}$}] at (\a+\a/2-3/4*\b,-\dd) {};
\end{tikzpicture} 
\qquad
 \text{or} \quad &&\y^{(j+1)} \quad
&& \begin{tikzpicture}[baseline=-0.75ex]
        \tikzset{
    brace/.style={decoration={brace},decorate},
    every pin edge/.style={thin}
 }
 \draw (0,0) -- (2*\a,0);
 \draw (0,\d) -- (0,-\d) node[below] {0};
 \draw (2*\a,\d) -- (2*\a,-\d) node[below] {$2^{j+1}-1$};
 \draw (\a/2,\d) -- (\a/2,-\d);
 \draw (\a,\d) -- (\a,-\d) node[below] {$2^j-1$};
 \draw (3*\a/2,\d) -- (3*\a/2,-\d);
 \draw[blue, thick, pattern=north east lines, pattern color=blue] (0,\dd) -- (0,-\dd) -- (1/4*\b,-\dd) -- (1/4*\b,\dd) -- (0,\dd); 
 \draw[orange, thick, pattern=north west lines, pattern color=orange] (0,-\dd)  -- (0,\dd) -- (3/4*\b,\dd) -- (3/4*\b,-\dd) -- (0,-\dd);
 \draw[blue, thick, pattern=north east lines, pattern color=blue] (2*\a-3/4*\b,\dd) -- (2*\a-3/4*\b,-\dd) -- (2*\a,-\dd) -- (2*\a,\dd) -- (2*\a-3/4*\b,\dd); 
 \draw[orange, thick, pattern=north west lines, pattern color=orange] (2*\a-1/4*\b,-\dd)  -- (2*\a-1/4*\b,\dd) -- (2*\a,\dd) -- (2*\a,-\dd) -- (2*\a-1/4*\b,-\dd);
 \node [inner sep=3pt,pin={[inner sep=2pt, pin distance=0.15cm]265:$\mu^{(j+1)}=2^j+\mu^{(j)}$}] at (2*\a-3/4*\b,-\dd) {};
\end{tikzpicture}
\end{alignat*}
\caption{Illustration of the two possibilities for the support of $\y^{(j+1)}$ for given $\y^{(j)}$ according to Theorem \ref{thm:supp}  in case \ref{item:thm1_middle} (left) and case \ref{item:thm1_bound} (right).}
\label{fig1}
\end{figure}
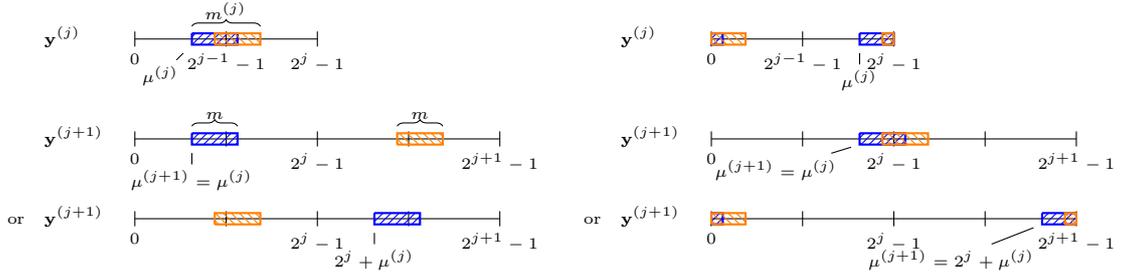
\begin{figure}[!ht]
\tiny
\begin{alignat*}{4}
&\y^{(J-1)} \quad 
&& \begin{tikzpicture}[baseline=-0.75ex]
      \tikzset{
    brace/.style={decoration={brace},decorate},
    every pin edge/.style={thin}
 }
 \draw (0,0) -- (\a,0);
 \draw (0,\d) -- (0,-\d) node[below] {0};
 \draw (\a,\d) -- (\a,-\d) node[below] {$2^{J-1}-1$} ;
 \draw (\a/2,\d) -- (\a/2,-\d) node[below] {$2^{J-2}-1$};
 \node (1start) at (0,\dd) {};
 \node (1end) at (3/4*\b,\dd) {};
 \node (2start) at (\a-3/4*\b,\dd) {};
 \node (2end) at (\a,\dd) {};  
 \draw[blue, thick, pattern=north east lines, pattern color=blue] (\a-3/4*\b,\dd) -- (\a-3/4*\b,-\dd) -- (\a,-\dd) -- (\a,\dd) -- (\a-3/4*\b,\dd);
 \draw[blue, thick, pattern=north east lines, pattern color=blue] (0,\dd) -- (0,-\dd) -- (1/4*\b,-\dd) -- (1/4*\b,\dd) -- (0,\dd);
 \draw[orange, thick, pattern=north west lines, pattern color=orange] (\a-1/4*\b,-\dd) -- (\a-1/4*\b,\dd) -- (\a,\dd) -- (\a,-\dd) -- (\a-1/4*\b,-\dd);
 \draw[orange, thick, pattern=north west lines, pattern color=orange] (0,-\dd) -- (0,\dd) -- (3/4*\b,\dd) -- (3/4*\b,-\dd) -- (0,-\dd);
 \node [inner sep=3pt,pin={[inner sep=2pt, pin distance=0.2cm]267:$\mu^{(J-1)}$}] at (\a-3/4*\b,-\dd) {};
\end{tikzpicture} 
\qquad 
&&\y^{(j)} \quad 
&& \begin{tikzpicture}[baseline=-0.75ex]
\tikzset{every pin edge/.style={thin}}
 \draw (0,0) -- (\a,0);
 \draw (0,\d) -- (0,-\d) node[below] {0};
 \draw (\a,\d) -- (\a,-\d) node[below] {$2^j-1$} ;
 \draw (\a/2,\d) -- (\a/2,-\d) node[below] {$2^{j-1}-1$};
 \draw[blue, thick, pattern=north east lines, pattern color=blue] (\a/4-3/4*\b,\dd) -- (\a/4-3/4*\b,-\dd) -- (\a/4+1/4*\b,-\dd) -- (\a/4+1/4*\b,\dd) -- (\a/4-3/4*\b,\dd);
 \draw[orange, thick, pattern=north west lines, pattern color=orange] (3/4*\a-1/4*\b,-\dd) -- (3/4*\a-1/4*\b,\dd) -- (3/4*\a+3/4*\b,\dd) -- (3/4*\a+3/4*\b,-\dd) -- (3/4*\a-1/4*\b,-\dd);
 \node [inner sep=3pt,pin={[inner sep=2pt, pin distance=0.1cm]275:$\mu^{(j)}$}] at (\a/4-3/4*\b,-\dd) {};
\end{tikzpicture} \\
& \y^{(J)} \quad 
&& \begin{tikzpicture}[baseline=-0.75ex]
      \tikzset{
    brace/.style={decoration={brace},decorate},
    every pin edge/.style={thin}
    }
 \draw (0,0) -- (2*\a,0);
 \draw (0,\d) -- (0,-\d) node[below] {0};
 \draw (2*\a,\d) -- (2*\a,-\d) node[below] {$2^{J}-1$};
 \draw (\a/2,\d) -- (\a/2,-\d);
 \draw (\a,\d) -- (\a,-\d) node[below] {$2^{J-1}-1$};
 \draw (3*\a/2,\d) -- (3*\a/2,-\d);
 \node (1start) at (0,\dd) {};
 \node (1end) at (1/4*\b,\dd) {}; 
 \node (2start) at (\a-3/4*\b,\dd) {};
 \node (2end) at (\a+3/4*\b,\dd) {};
 \node (3start) at (2*\a-1/4*\b,\dd) {};
 \node (3end) at (2*\a,\dd) {};
 \draw[blue, thick, pattern=north east lines, pattern color=blue] (0,\dd) -- (0,-\dd) -- (1/4*\b,-\dd) -- (1/4*\b,\dd) -- (0,\dd); 
 \draw[orange, thick, pattern=north west lines, pattern color=orange] (\a,-\dd)  -- (\a,\dd) -- (\a+3/4*\b,\dd) -- (\a+3/4*\b,-\dd) -- (\a,-\dd);
 \draw[blue, thick, pattern=north east lines, pattern color=blue] (\a-3/4*\b,\dd) -- (\a-3/4*\b,-\dd) -- (\a,-\dd) -- (\a,\dd) -- (\a-3/4*\b,\dd); 
 \draw[orange, thick, pattern=north west lines, pattern color=orange] (2*\a-1/4*\b,-\dd)  -- (2*\a-1/4*\b,\dd) -- (2*\a,\dd) -- (2*\a,-\dd) -- (2*\a-1/4*\b,-\dd);
 \node [inner sep=3pt,pin={[inner sep=2pt, pin distance=0.1cm]215:$\eta^{(J)}$}] at (2*\a-1/4*\b,-\dd) {};
 \node [inner sep=3pt,pin={[inner sep=2pt, pin distance=0.05cm]265:$\mu^{(J)}=\mu^{(J-1)}$}] at (\a-3/4*\b,-\dd) {};
\end{tikzpicture} 
\qquad
&& \y^{(j+1)} \quad 
&& \begin{tikzpicture}[baseline=-0.75ex]
\tikzset{every pin edge/.style={thin}}
 \draw (0,0) -- (2*\a,0);
 \draw (0,\d) -- (0,-\d) node[below] {0};
 \draw (2*\a,\d) -- (2*\a,-\d) node[below] {$2^{j+1}-1$};
 \draw (\a/2,\d) -- (\a/2,-\d);
 \draw (\a,\d) -- (\a,-\d) node[below] {$2^j-1$};
 \draw (3*\a/2,\d) -- (3*\a/2,-\d);
 \draw[blue, thick, pattern=north east lines, pattern color=blue] (\a/4-3/4*\b,\dd) -- (\a/4-3/4*\b,-\dd) -- (\a/4+1/4*\b,-\dd) -- (\a/4+1/4*\b,\dd) -- (\a/4-3/4*\b,\dd); 
 \draw[orange, thick, pattern=north west lines, pattern color=orange] (\a+3/4*\a-1/4*\b,-\dd) -- (\a+3/4*\a-1/4*\b,\dd) -- (\a+3/4*\a+3/4*\b,\dd) -- (\a+3/4*\a+3/4*\b,-\dd) -- (\a+3/4*\a-1/4*\b,-\dd);
 \node [inner sep=3pt,pin={[inner sep=2pt, pin distance=0.1cm]275:$\mu^{(j)}$}] at (\a/4-3/4*\b,-\dd) {};
\end{tikzpicture} \\ 
\text{or} \quad & \y^{(J)} \quad 
&& \begin{tikzpicture}[baseline=-0.75ex]
      \tikzset{
    brace/.style={decoration={brace},decorate},
    every pin edge/.style={thin}
 }
 \draw (0,0) -- (2*\a,0);
 \draw (0,\d) -- (0,-\d) node[below] {0};
 \draw (2*\a,\d) -- (2*\a,-\d) node[below] {$2^{J}-1$};
 \draw (\a/2,\d) -- (\a/2,-\d);
 \draw (\a,\d) -- (\a,-\d) node[below] {$2^{J-1}-1$};
 \draw (3*\a/2,\d) -- (3*\a/2,-\d);
 \node (1start) at (0,\dd) {};
 \node (1end) at (3/4*\b,\dd) {}; 
 \node (2start) at (\a-1/4*\b,\dd) {};
 \node (2end) at (\a+1/4*\b,\dd) {};
 \node (3start) at (2*\a-3/4*\b,\dd) {};
 \node (3end) at (2*\a,\dd) {};
 \draw[orange, thick, pattern=north east lines, pattern color=orange] (0,\dd) -- (0,-\dd) -- (3/4*\b,-\dd) -- (3/4*\b,\dd) -- (0,\dd); 
 \draw[blue, thick, pattern=north west lines, pattern color=blue] (\a,-\dd)  -- (\a,\dd) -- (\a+1/4*\b,\dd) -- (\a+1/4*\b,-\dd) -- (\a,-\dd);
 \draw[orange, thick, pattern=north east lines, pattern color=orange] (\a-1/4*\b,\dd) -- (\a-1/4*\b,-\dd) -- (\a,-\dd) -- (\a,\dd) -- (\a-1/4*\b,\dd); 
 \draw[blue, thick, pattern=north west lines, pattern color=blue] (2*\a-3/4*\b,-\dd)  -- (2*\a-3/4*\b,\dd) -- (2*\a,\dd) -- (2*\a,-\dd) -- (2*\a-3/4*\b,-\dd);
 \node [inner sep=3pt,pin={[inner sep=2pt, pin distance=0.15cm]255:$\eta^{(J)}=2^{J-1}+\mu^{(J-1)}$}] at (2*\a-3/4*\b,-0.15) {};
\end{tikzpicture} 
\qquad
\text{or} \quad &&\y^{(j+1)} \quad
 && \begin{tikzpicture}[baseline=-0.75ex]
 \tikzset{every pin edge/.style={thin}}
 \draw (0,0) -- (2*\a,0);
 \draw (0,\d) -- (0,-\d) node[below] {0};
 \draw (2*\a,\d) -- (2*\a,-\d) node[below] {$2^{j+1}-1$};
 \draw (\a/2,\d) -- (\a/2,-\d);
 \draw (\a,\d) -- (\a,-\d) node[below] {$2^j-1$};
 \draw (3*\a/2,\d) -- (3*\a/2,-\d);
 \draw[blue, thick, pattern=north east lines, pattern color=blue] (\a+\a/4-3/4*\b,\dd) -- (\a+\a/4-3/4*\b,-\dd) -- (\a+\a/4+1/4*\b,-\dd) -- (\a+\a/4+1/4*\b,\dd) -- (\a+\a/4-3/4*\b,\dd); 
 \draw[orange, thick, pattern=north west lines, pattern color=orange] (3/4*\a-1/4*\b,-\dd) -- (3/4*\a-1/4*\b,\dd) -- (3/4*\a+3/4*\b,\dd) -- (3/4*\a+3/4*\b,-\dd) -- (3/4*\a-1/4*\b,-\dd);
 \node [inner sep=3pt,pin={[inner sep=2pt, pin distance=0.15cm]295:$2^j+\mu^{(j)}$}] at (\a+\a/4-3/4*\b,-\dd) {};
 \node [inner sep=3pt,pin={[inner sep=2pt, pin distance=0.15cm]255:$2^j-m\jj-\mu^{(j)}$}] at (\a-3/4*\b,-\dd) {};
\end{tikzpicture} 
 \end{alignat*}
\caption{Illustration of the two possibilities for the support of $\y^{(j+1)}$ for given $\y^{(j)}$ according to Theorem \ref{thm:supp} in case \ref{item:thm1_bound_J} (left) and case \ref{item:thm1_b} (right).}
\label{fig2}
\end{figure} 
\normalsize

Let us now consider the support properties of the periodization $\y^{(j)}\in\RR^{2^j}$ for $j<J$. 
\begin{lem}\label{lem:support}
Let $N=2^{J-1}$ and $\x\in\RR^N$ have a one-block support of length $m<N$. Set $\y=(\x^T,(\J_N\x)^T)^T$ and assume that $\y$ satisfies $(\ref{eq:suppose})$. For $j\in\{0,\dotsc,J-1\}$ let $\y\jj$ be the $2^j$-length periodization of $\y$ according to $(\ref{eq:per_sum})$. Then $\y^{(j)}$ possesses either 
\begin{enumerate}[label=\upshape\Alph*),ref=\Alph*]
\item\label{item:supp_a} the (reflected) one-block support $S\jj=I_{\mu^{(j)}, \nu^{(j)}}$ of length $m^{(j)} \le 2m$ for some $\mu\jj\in\{0,\dotsc,2^j-1\}$ with $\nu\jj=2^j-1-\mu\jj$, which is centered around the middle of the vector, i.e., $2^{j-1}-1$ and $2^{j-1}$, or around the boundary, i.e., $2^j-1$ and $0$. Here, $m\jj=\nu\jj-\mu\jj+1$ if $\nu\jj\geq\mu\jj$ and $m\jj=2^j-\mu\jj+\nu\jj+1$ if $\nu\jj<\mu\jj$, or 
\item\label{item:supp_b} the (reflected) two-block support $S\jj=I_{\mu^{(j)}, \nu^{(j)}}\cup I_{2^{j}-1-\nu^{(j)}, 2^{j}-1-\mu^{(j)}}$ for  $\mu\jj,\nu\jj\in\{0,\dotsc,2^j-1\}$, where the two blocks  have length $n\jj=m$ and are separated.
\end{enumerate}
\end{lem}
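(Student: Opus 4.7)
Hypothesis~\eqref{eq:suppose} is designed exactly to prevent cancellations in the periodization sum~\eqref{eq:per_sum}: for every $k_0\in S^{(J)}$ it guarantees $y^{(j)}_{k_0\mods 2^j}\neq 0$, so that
\[
 S^{(j)} \;=\; \{\,k\mods 2^j : k\in S^{(J)}\,\}.
\]
This reduces the claim to a combinatorial analysis of how the explicit support $S^{(J)}$ from Lemma~\ref{lem:supporty} folds onto $\{0,\dots,2^j-1\}$ under $k\mapsto k\mods 2^j$. In addition, Lemma~\ref{lem:periodization} yields $\y^{(j)}=\J_{2^j}\y^{(j)}$, so $S^{(j)}$ is forced to be invariant under the reflection $r_j\colon k\mapsto 2^j-1-k$ on the cycle $\ZZ/2^j\ZZ$.

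\textbf{Case analysis.} I split according to the two cases of Lemma~\ref{lem:supporty}. In case~(i) ($\mu^\x\le\nu^\x$), $S^{(J)}$ is a pair of reflected length-$m$ blocks (with the boundary subcases $\mu^\x=0$ and $\nu^\x=N-1$ collapsing them into a single reflected block centered at the boundary or at the middle of $\{0,\dots,2^J-1\}$, respectively). Since $m<N=2^{J-1}$, each of the two blocks folds to a single cyclic arc of length $\min(m,2^j)\le m$ in $\ZZ/2^j\ZZ$, and the two folded arcs remain reflections of each other under $r_j$. If they are disjoint, each has length exactly $m$ and we obtain case~\ref{item:supp_b} with $n^{(j)}=m$; if they meet, their union is a single $r_j$-symmetric arc of length at most $2m$, giving case~\ref{item:supp_a}. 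Case~(ii) ($\mu^\x>\nu^\x$) is analogous: $S^{(J)}$ already consists of one block centered at the middle and one centered at the boundary of $\{0,\dots,2^J-1\}$, and folding preserves this centering, so the same disjoint-versus-merging dichotomy again yields~\ref{item:supp_a} or~\ref{item:supp_b}.

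\textbf{Main obstacle.} The key geometric step is that any $r_j$-symmetric single cyclic arc in $\ZZ/2^j\ZZ$ must be centered at one of the two $r_j$-fixed midpoints, namely between $2^{j-1}-1$ and $2^{j-1}$ or between $2^j-1$ and $0$; this is what pins down the two centerings permitted in~\ref{item:supp_a}. The remaining bookkeeping---tracking how $\mu^\x,\nu^\x,j$ determine $\mu^{(j)}$, and checking that in the disjoint subcase the two folded blocks are indeed separated rather than adjacent---is routine, and the length bound $m^{(j)}\le 2m$ in~\ref{item:supp_a} is just $|A_1\cup A_2|\le|A_1|+|A_2|\le 2m$ for the two folded arcs $A_1,A_2$.
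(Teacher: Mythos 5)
Your proof takes a genuinely different route from the paper's. The paper argues top-down, one periodization step at a time: by (\ref{eq:per_el}) the two blocks of $\y^{(j+1)}$ either map to two blocks of the same length in $\y^{(j)}$ or merge into one, the symmetry (\ref{eq:symmetry}) forces the centering, and the one-block property then persists for all shorter periodizations. You instead invoke (\ref{eq:suppose}) once to obtain the exact identity $S^{(j)}=\{k\bmod 2^j:k\in S^{(J)}\}$ and then analyse directly how the explicit support from Lemma \ref{lem:supporty} folds onto $\ZZ/2^j\ZZ$. This is arguably the cleaner use of the hypothesis: the paper's proof only appeals to an upper bound on the sparsity, whereas your identity is precisely what rules out holes opening inside a folded block. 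The cost is that you must carry out the cyclic-arc combinatorics explicitly, and there you make one concrete misstep.

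The misstep is the claim that when the two folded arcs are disjoint they are ``indeed separated rather than adjacent,'' which you use to assign the disjoint subcase to \ref{item:supp_b}. That implication is false. Take $J=5$, $\x$ supported on $I_{4,5}$ (so $m=2$, $\mu^\x=4$) and $j=3$: the two blocks of $S^{(J)}$ are $\{4,5\}$ and $\{26,27\}$, which fold to the disjoint but adjacent arcs $\{4,5\}$ and $\{2,3\}$ in $\ZZ/8\ZZ$; their union $\{2,3,4,5\}$ is a single symmetric arc centered around $2^{j-1}-1=3$ and $2^{j-1}=4$, i.e.\ case \ref{item:supp_a}, not \ref{item:supp_b}. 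Adjacency occurs systematically, e.g.\ whenever $\mu^\x\equiv 0\pmod{2^{j-1}}$. The lemma itself survives because an adjacent pair merges into one symmetric arc of length $2m$, but your case split must read: separated $\Rightarrow$ \ref{item:supp_b}; overlapping \emph{or adjacent} $\Rightarrow$ \ref{item:supp_a}. The distinction is not cosmetic, since the algorithm handles the two cases by entirely different procedures. A smaller inaccuracy: in the case $\mu^\x>\nu^\x$ of Lemma \ref{lem:supporty} the block centered around $N-1$ and $N$ does \emph{not} keep its centering under folding; since $2^{J-1}\equiv 0\pmod{2^j}$ for $j\le J-1$, both blocks fold to arcs centered around the boundary of $\ZZ/2^j\ZZ$, hence they always intersect and this case always lands in \ref{item:supp_a} --- simpler than the dichotomy you describe.
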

\begin{proof} 
By definition of the periodization $\y^{(j)}$ in (\ref{eq:per_sum}) the number of support indices $k$ with $y_{k}^{(j)} \neq 0$  can never exceed the number of nonzero elements of $\y^{(j+1)}$, also called sparsity. If $\y^{(j+1)}$ has a two-block support, then $\y^{(j)}$ possesses at most two support blocks, as the two blocks of $\y^{(j+1)}$ are either mapped to two blocks of the same length in $\y^{(j)}$ or to one block consisting of the two partially overlapping blocks of $\y^{(j+1)}$ by (\ref{eq:per_el}). Thus, if $\y^{(j)}$ has a one-block support, its length $m\jj$ is at most $2m$. Moreover, by the symmetry property (\ref{eq:symmetry}), the block has to be centered around the middle of the vector or around its boundary. It also follows from (\ref{eq:symmetry}) that the blocks are reflections of each other in the two-block case. If we have obtained $\y^{(l)}$ with a one-block support, all shorter periodizations $\y^{(j)}$, $j\in\{0, \ldots , l-1\}$, possess a one-block support of length $m\jj\leq2m$ as well. Figure \ref{fig1} and Figure \ref{fig2} (right) depict cases \ref{item:supp_a} and \ref{item:supp_b}, respectively.
\end{proof}
We always denote by $\mu\jj$ and $\nu\jj$ the \emph{first and last support index} of $\y\jj$ if $\y\jj$ has a one-block support, or the \emph{first index of the first support block} and the \emph{last index of the first support block} if $\y\jj$ has a two-block support. In the one-block case the \emph{support length} of $\y\jj$ is $m\jj$, and in the two-block case we denote the \emph{length of the two support blocks} by $n\jj$ for algorithmic purposes, even though $n\jj=m$ for exact data.

If $\y^{(j)}$ has a one-block support of length $2^{j-1} < m^{(j)} \le 2^{j}$, the first support index may not be uniquely determined. For example, for $\y^{(3)} = (0,1,1,0,0,1,1,0)^{T}$, which by definition can be considered to have a one-block support of length $6$, the support interval can be either $S^{(3)}=I_{1,6}$ or $S^{(3)}=I_{5,2}$. Note that by Lemma \ref{lem:support} the support of $\y\jj$ is symmetric, i.e., $S^{(j)} = I_{\mu^{(j)},2^{j}-1-\mu^{(j)}}$, which can be used to exclude some possible first support indices. If the  first support index is still not unique, we choose $0 \le \mu^{(j)} < 2^{j-1}-1$ such that $S^{(j)}$ is centered  around the middle of the vector, i.e., around $2^{j-1}-1$ and $2^{j-1}$. In the example above we choose $\mu^{(3)} \coloneqq 1$. If $m=2^{j}$, we just fix $\mu^{(j)}\coloneqq 0$. 
\begin{ex}
1. Let $\x=(0,x_{1},x_{2},0,0,0,0,0)^{T} \in {\mathbb R}^{8}$ with nonzero entries $x_{1}$, $x_{2}$, i.e., with one-block support $S^\x=I_{1,2}$ of length $m=2$. Then $\y$ and its periodizations are
\begin{align*}
 \y= \y^{(4)} & = (0,x_{1}, x_{2}, 0,0,0,0,0,0,0,0,0,0,x_{2},x_{1},0)^{T}, \\
 \y^{(3)} & = (0,x_{1}, x_{2},0,0,x_{2},x_{1},0)^{T}, \\
 \y^{(2)} & = (0,x_{1}+ x_{2},x_{1}+x_{2},0)^{T},\\
 \y^{(1)} & = (x_{1}+ x_{2}, x_{1}+ x_{2})^{T}, \\
 \y^{(0)} & = \left(2(x_{1}+x_{2})\right)^{T}.
 \end{align*}
 Here, $\y$ has the reflected block support $S^{(4)}=I_{1,2} \cup I_{13,14}$ of length $n^{(4)}=m=2$, as in case \ref{item:supporty_i} of Lemma \ref{lem:supporty}, and $\y^{(3)}$ has the two-block support $S^{(3)}=I_{1,2}\cup I_{5,6}$ of length $n^{(3)}=m=2$. The periodization $\y^{(2)}$ has the one-block support $S^{(2)}=I_{1,2}$ of length $m^{(2)}=2 < 2m$, centered around the middle of the vector, i.e., $1$ and $2$, with $\mu^{(2)} = 1$. The vectors $\y^{(1)}$ and $\y^{(0)}$ both have full support, which can be interpreted as a one-block support centered around the middle with $\mu^{(1)}=\mu^{(0)} = 0$, $m^{(1)}=2$ and $m^{(0)}=1$.
 
2. Let $\x = (x_{0}, x_{1}, 0,0,0,0,0,x_{7})^{T}$ with nonzero entries $x_0$, $x_{1}$ and $x_7$, i.e., with one-block support  $S^{\x}=I_{7,1}$ of length $m=3$. Then we obtain 
\begin{align*}
 \y= \y^{(4)} & = (x_{0},x_{1}, 0, 0,0,0,0,x_{7},x_{7},0,0,0,0,0,x_{1},x_{0})^{T}, \\
 \y^{(3)} & = (x_{0}+x_{7},x_{1},0,0,0,0,x_{1},x_{0}+x_{7})^{T}, \\
 \y^{(2)} & = (x_{0} + x_{7},x_{1},x_{1},x_{0}+x_{7})^{T},\\
 \y^{(1)} & = (x_{0}+ x_{1}+ x_{7}, x_{0}+x_{1}+ x_{7})^{T}, \\
 \y^{(0)} & = \left(2(x_{0}+x_{1}+x_{7})\right)^{T}.
 \end{align*}
 Here, $\y$ has the reflected block support $S^{(4)}=I_{7,8} \cup I_{14,1}$ with block lengths $2$ and $4$, as in case \ref{item:supporty_i} of Lemma \ref{lem:supporty}. The vector $\y^{(3)}$ has the one-block support $S^{(3)}=I_{14,1}$ of length $m^{(3)}=4$ with $\mu^{(3)}=6$, centered around $15$ and $0$. All shorter periodizations have a one-block support as well, but centered around the middle  with $\mu^{(2)}=\mu^{(1)} = \mu^{(0)} = 0$. 
 \end{ex}
In our algorithm we want to reconstruct  $\y^{(j+1)}$ iteratively for $j\in\{0, \ldots ,J-1\}$, using in each step that $\y\jj$ is already known. In the following theorem we summarize the observations about the support properties of $\y^{(j+1)}$ given the support of $\y^{(j)}$.
\begin{thm} \label{thm:supp}
Let $N=2^{J-1}$ and $\x\in\RR^N$ have a one-block support of length $m<N$. Set $\y=(\x^T,(\J_N\x)^T)^T$ and assume that $\y$ satisfies $(\ref{eq:suppose})$. For $j\in\{0,\dotsc,J-1\}$ let $\y\jj$ be the $2^j$-length periodization of $\y$ according to $(\ref{eq:per_sum})$.
\begin{enumerate}
\labitem{$\mathrm{A}_1$}{item:thm1_middle} If $\y^{(j)}$ possesses the one-block support $S\jj=I_{\mu^{(j)},2^{j}-1-\mu^{(j)}}$ of length $m\jj\leq2m$ with $m\jj<2^j$ centered around the middle of the vector, i.e., around $2^{j-1}-1$ and $2^{j-1}$, then $\y^{(j+1)}$ possesses the two-block support  
\begin{equation*}
S\jjj=I_{\mu^{(j+1)},\nu^{(j+1)}} \cup I_{2^{j+1}-1-\nu^{(j+1)},2^{j+1}-1-\mu^{(j+1)}},
\end{equation*}
with two  blocks of length $n\jjj=m$, where either $\mu^{(j+1)} = \mu^{(j)}$ or $2^{j+1}-1-\nu^{(j+1)}= \mu^{(j)} + 2^{j}$, see Figure $\ref{fig1}$ (left).

\labitem{$\mathrm{A}_2$}{item:thm1_full}
If $\y\jj$ possesses the one-block support $S\jj=I_{0,2^j-1}$ of length $m\jj=2^j$ and $j<J-1$, then $\y\jjj$ has a one-block support of length $m^{(j+1)}\geq m\jj$. In the special case $j=J-1$, $\y=\y^{(J)}$ has a two-block support with two blocks of possibly different lengths or a one-block support of length $m^{(J)}\geq m^{(J-1)}$.

\labitem{$\mathrm{A}_3$}{item:thm1_bound} If $j < J-1$ and $\y^{(j)}$ possesses the one-block support $I_{\mu^{(j)},2^{j}-1-\mu^{(j)}}^{(j)}$ of length $m^{(j)}<2^j$ centered around the boundary of the vector, i.e., around $2^j-1$ and $0$, then $\y^{(j+1)}$ possesses the one-block support
\begin{equation*}
S\jjj=I_{\mu^{(j+1)}, 2^{j+1} -1-\mu^{(j+1)}}
\end{equation*} 
 of length $m^{(j+1)}=m^{(j)}$, centered around the middle or the boundary of the vector, i.e., around $2^j-1$ and $2^j$ or around $2^{j+1}-1$ and $0$. Here, $\mu^{(j+1)} = \mu^{(j)}$ or $\mu^{(j+1)} = \mu^{(j)} + 2^{j}$, see Figure $\ref{fig1}$ (right). 

\labitem{$\mathrm{A}_4$}{item:thm1_bound_J} If $j=J-1$ and $\y^{(J-1)}$ has the one-block support $S^{(J-1)}=I_{\mu^{(J-1)},2^{J-1}-1-\mu^{(J-1)}}$ with $m^{(J-1)}<2^{J-1}$ centered around the boundary of the vector, i.e., around $2^{J-1}-1$ and $0$, then $\y= \y^{(J)}$ possesses the two-block support 
\begin{equation}\label{eq:thm1_bound_J} 
 S^{(J)}=I_{\mu^{(J)},2^{J}-1-\mu^{(J)}} \cup I_{\eta^{(J)},2^{J}-1-\eta^{(J)}}\qquad  \text{with} \quad \mu^{(J)}<2^{J-1} \leq \eta^{(J)},
\end{equation}
where the two blocks may have different lengths. In the boundary cases $\mu^\x=0$ or $\nu^\x=2^{J-1}-1$ one of these blocks is empty. We always have that either $\mu^{(J)} = \mu^{(J-1)}$  or $\eta^{(J)}= 2^{J-1}+\mu^{(J-1)}$, but $\mu^{(J)}$ and $\eta^{(J)}$ are not both known, see Figure $\ref{fig2}$ (left). 

\labitem{\textnormal{B}}{item:thm1_b} If $\y^{(j)}$ possesses the two-block support $S\jj=I_{\mu^{(j)},\nu^{(j)}} \cup I_{2^{j}-1-\nu^{(j)},2^{j}-1-\mu^{(j)}}$ with block length $n\jj=m$, then $\y^{(j+1)}$ has the two-block support  
\begin{equation}\label{eq:thm1_b}
S\jjj=I_{\mu^{(j+1)},\nu^{(j+1)}} \cup I_{2^{j+1}-1-\nu^{(j+1)},2^{j+1}-1-\mu^{(j+1)}}
\end{equation} 
with block length $n\jjj=m$, where $\mu^{(j+1)} = \mu^{(j)}$ or $\mu^{(j+1)} = 2^{j}-m- \mu^{(j)}$, see Figure $\ref{fig2}$ (right). If $\mu\jjj=\mu\jj$, the first blocks of $\y\jj$ and $\y\jjj$ are identical, with the same support, and the second block of $\y\jjj$ is the second block of $\y\jj$, shifted by $2^j$. Otherwise the first block of $\y\jjj$ is the second block of $\y\jj$, with the same support, and the second block of $\y\jjj$ is the first block of $\y\jj$, shifted by $2^j$.
\end{enumerate}
\end{thm}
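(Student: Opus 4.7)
The plan is to combine three ingredients: the pointwise identity $y^{(j)}_{k}=y^{(j+1)}_{k}+y^{(j+1)}_{k+2^{j}}$ from (\ref{eq:per_el}), the no-cancellation hypothesis (\ref{eq:suppose}), and the reflection symmetry $\y^{(j+1)}=\J_{2^{j+1}}\y^{(j+1)}$ from Lemma \ref{lem:periodization}. The first two together force the image of $S^{(j+1)}$ under reduction modulo $2^{j}$ to equal $S^{(j)}$, and in particular $S^{(j+1)}\subseteq S^{(j)}\cup(2^{j}+S^{(j)})$. The third says that $S^{(j+1)}$ is invariant under the reflection $k\mapsto 2^{j+1}-1-k$. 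Lemma \ref{lem:support} applied at level $j+1$ then restricts $S^{(j+1)}$ to a one-block support (centered at the middle or at the boundary) or to a separated two-block support of block length $m$, and each case of the theorem is obtained by intersecting these candidate shapes with the allowed region $S^{(j)}\cup(2^{j}+S^{(j)})$.

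In case \ref{item:thm1_middle} the allowed region decomposes as one interval in each half of $\{0,\dotsc,2^{j+1}-1\}$. A reflection-symmetric one-block cannot live inside a single half, so Lemma \ref{lem:support} forces the two-block form with block length $m$. Writing the symmetric pair of blocks and demanding that their projections modulo $2^{j}$ together cover $S^{(j)}$ leaves exactly the two options $\mu^{(j+1)}=\mu^{(j)}$ or $2^{j+1}-1-\nu^{(j+1)}=\mu^{(j)}+2^{j}$. Case \ref{item:thm1_bound} is the analogous computation when $S^{(j)}$ is boundary-wrapping: the allowed region splits into a middle-centered interval and a boundary-wrapping interval, each of length $m^{(j)}$, and the reflection maps each of them to itself; either one may be chosen as $S^{(j+1)}$, giving the two stated positions for $\mu^{(j+1)}$, and a short check rules out genuinely separated two-block configurations here, so only the one-block form of length $m^{(j)}$ survives.

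Case \ref{item:thm1_full} proceeds the same way but now the allowed region is all of $\{0,\dotsc,2^{j+1}-1\}$; the only way a separated two-block $S^{(j+1)}$ of block length $m$ can project to the full set is by forcing $\mu^{(j+1)}=0$, at which point the two blocks touch through the wraparound and must be read as a single boundary-centered one-block. Hence only the one-block form survives for $j<J-1$, and its length satisfies $m^{(j+1)}\ge m^{(j)}$ because the projection cannot strictly shrink the support under (\ref{eq:suppose}). For case \ref{item:thm1_bound_J} the projection argument no longer excludes the genuine two-block configuration, so I would instead invoke Lemma \ref{lem:supporty} directly to read the support of $\y^{(J)}$ off the support of $\x$; the claim that $\mu^{(J)}=\mu^{(J-1)}$ or $\eta^{(J)}=2^{J-1}+\mu^{(J-1)}$ then follows from comparing the start-indices of the blocks described there with the start-index of the (unique) boundary-wrapping block of $\y^{(J-1)}$, together with the observation that the two reflected blocks of $\y^{(J)}$ project to the same block of $\y^{(J-1)}$ and hence cannot be individually identified from $\y^{(J-1)}$. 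The degenerate cases $\mu^{\x}=0$ or $\nu^{\x}=2^{J-1}-1$ collapse one of the two blocks to the empty interval and are read off directly from part \ref{item:supporty_i} of Lemma \ref{lem:supporty}.

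Finally, case \ref{item:thm1_b} is the richest one but follows the same template. Now $S^{(j)}\cup(2^{j}+S^{(j)})$ is a union of four pairwise disjoint intervals of length $m$, arranged in two reflection-pairs about $2^{j}-\tfrac{1}{2}$. Reflection symmetry forces $S^{(j+1)}$ to consist of one such reflection-pair, and there are only two admissible pairings, which give precisely $\mu^{(j+1)}=\mu^{(j)}$ (first block of $\y^{(j)}$ kept, second block shifted by $2^{j}$) and $\mu^{(j+1)}=2^{j}-m-\mu^{(j)}$ (roles of the two blocks of $\y^{(j)}$ swapped and the former first block shifted by $2^{j}$). The main obstacle I expect throughout is organizational rather than technical: in each case one has to rule out spurious one-block or mixed two-block configurations against Lemma \ref{lem:support} and the reflection symmetry, and carefully distinguish boundary-wrapping blocks from genuinely separated ones.
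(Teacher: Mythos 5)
Your argument is correct and takes essentially the same route as the paper: the paper's own proof simply asserts that all cases follow from the periodization identity (\ref{eq:per_el}), the no-cancellation assumption (\ref{eq:suppose}), the symmetry (\ref{eq:symmetry}), and Lemmas \ref{lem:supporty} and \ref{lem:support}, which are exactly the ingredients you combine. Your write-up just makes explicit the case-by-case intersection of the admissible region $S^{(j)}\cup\left(2^{j}+S^{(j)}\right)$ with the reflection-symmetric shapes permitted by Lemma \ref{lem:support}, a verification the paper delegates to the reader and to Figures \ref{fig1} and \ref{fig2}.
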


\begin{proof}
Cases \ref{item:thm1_middle} to \ref{item:thm1_bound_J}, henceforth subsumed to Case \labitems{A}{item:thm1_a}, summarize the support properties of $\y^{(j+1)}$ if $\y^{(j)}$ possesses  a one-block support. Assertion \labitems{B}{item:thm1_b_bold}\ covers the case that $\y^{(j)}$ has a two-block support. 

All observations about the possible support blocks of $\y^{(j+1)}$  follow immediately from the known support $S\jj$ of $\y^{(j)}$ and (\ref{eq:per_sum}), using the results from Lemma \ref{lem:supporty} and Lemma \ref{lem:support}. Figures \ref{fig1}  and \ref{fig2} illustrate the described cases.
\end{proof}

\section{Iterative Sparse DFT Procedures}\label{sec:dft_procedures}
There is an important difference between  cases \ref{item:thm1_middle} to \ref{item:thm1_bound_J} of Theorem \ref{thm:supp} on the one hand and case \ref{item:thm1_b} on the other hand.
In cases \ref{item:thm1_middle} to \ref{item:thm1_bound_J} $\y\jj$ has a one-block support that usually contains overlapping entries of the original vector $\y$, i.e., some entries of $\y^{(j)}$ are obtained as sums of nonzero entries of $\y$. In case \ref{item:thm1_b}, however, both support blocks of $\y\jj$ are of length $n\jj=m$ and they are separated. Thus, the nonzero entries of $\y\jj$ and $\y\jjj$ are the same and we only have to find the first support indices of the blocks in $\y\jjj$ to obtain it from $\y\jj$.
In the following two sections  we therefore derive two different strategies for computing $\y^{(j+1)}$; the first one has to be employed in case \ref{item:thm1_a}, and the second one in case \ref{item:thm1_b}.

\subsection{A DFT Procedure for Case \ref{item:thm1_a}: One-block Support}
We assume that $\y^{(j)}$ possesses the one-block support 
\begin{equation}\label{eq:blocksupp}
S\jj=I_{\mu^{(j)},2^{j}-1-\mu^{(j)}} = I_{\mu^{(j)}, \left(\mu^{(j)}+m^{(j)}-1\right) \mods  2^{j}} 
\end{equation}
of length $m^{(j)} \le 2m$. Then it follows from (\ref{eq:per_el}) and Theorem \ref{thm:supp}, case \ref{item:thm1_a} that the support $S^{(j+1)}$  of $\y^{(j+1)}$ satisfies 
\[
 S^{(j+1)} \subset I_{\mu^{(j)}, \mu^{(j)}+m^{(j)}-1} \cup I_{2^j+\mu^{(j)}, \left(2^j+\mu^{(j)}+m^{(j)}-1\right) \mods 2^{j+1}}.
\]
The procedure developed hereafter utilizes that $\y\jjj$ is symmetric and thus determined by its first half. We define the two partial vectors 
\[
 \y^{(j+1)}_{(0)}\coloneqq\left(y^{(j+1)}_k\right)_{k=0}^{2^j-1} \quad \text{and}\quad  \y^{(j+1)}_{(1)}\coloneqq\left(y^{(j+1)}_k\right)_{k=2^j}^{2^{j+1}-1}
\]
of $\y^{(j+1)}$. Both halves of $\y\jjj$ have a one-block support, as can be seen in Figure \ref{fig1} and Figure \ref{fig2} (left). Hence, also the following inclusion holds for the support of $\y\jjj$, 
\begin{align}
 S\jjj\subset&\left\{\left(\mu\jj+r\right)\mods 2^j:r\in\left\{0,\dotsc,m\jj-1\right\}\right\} \notag \\
 \cup&\left\{2^j+\left(\mu\jj+r\right)\mods 2^j:r\in\left\{0,\dotsc,m\jj-1\right\}\right\}, \label{eq:supp_incl}
\end{align}
where the support of $\y\jjj_{(0)}$ is contained in the first set and the support of $\y\jjj_{(1)}$ is contained in the second set. In particular, $\y\jjj$ has at most $2m^{(j)}$ nonzero entries. The periodization property (\ref{eq:per_el}) implies that 
\begin{equation}\label{eq:periodization}
 \y^{(j)}=\y^{(j+1)}_{(0)}+\y^{(j+1)}_{(1)}, %\quad \text{and}\quad \y\jjj_{(1)}=\J_{2^j}\y\jjj_{(0)}.
\end{equation}
i.e., $\y^{(j+1)}$ is already determined by $\y^{(j)}$ and $\y^{(j+1)}_{0}$.
Moreover, we can use the symmetry property (\ref{eq:sym}) to find $\y\jjj_{(1)}=\J_{2^j}\y\jjj_{(0)}$ via a permutation instead of solving (\ref{eq:periodization}).

Since the support of $\y_{0}^{(j+1)}$ is a subset of the support of $\y\jj$ by (\ref{eq:supp_incl}), we have to recover at most $m\jj$  nonzero entries of $\y_{0}^{(j+1)}$. In order to efficiently compute these entries we consider restrictions of $\y\jj$ and $\y\jjj_{(0)}$ to vectors of length $2^{L\jj}$, where $2^{L^{(j)}-1} < m^{(j)} \le 2^{L^{(j)}}$, taking into account all nonzero entries.  We then show that $\y^{(j+1)}_{0}$ and thus $\y^{(j+1)}$ can be computed using essentially one DFT of length $2^{L^{(j)}}$ and some further operations of complexity ${\mathcal O}\left(m^{(j)}\right)$.
For this purpose we need to employ the known vector $\y^{(j)}$ from the previous iteration step and $2^{L^{(j)}}$ suitably chosen oddly indexed entries of $\widehat{\y\jjj}$, which can be obtained from $\widehat{\y}$ by Lemma \ref{lem:periodization}.

The efficient computation of $\y^{(j+1)}$ will be based on the following theorem.
\begin{thm}\label{thm:oneblock}
Let $N=2^{J-1}$ and $\x\in\RR^N$ have a one-block support of length $m<N$. Set $\y=(\x^T,(\J_N\x)^T)^T$ and assume that $\y$ satisfies $(\ref{eq:suppose})$. For $j\in\{0,\dotsc,J-1\}$ let $\y\jj$ be the $2^j$-length periodization of $\y$ according to $(\ref{eq:per_sum})$ and suppose that $\y\jj$ has the one-block support $S\jj$ of length $m\jj$ as in $(\ref{eq:blocksupp})$. Assume that we have access to all entries of $\widehat{\y} = \left(\widehat{y}_{k}\right)_{k=0}^{2^{J}-1}$.
Further, let $L^{(j)}\coloneqq\left\lceil\log_2m^{(j)}\right\rceil\leq j$ and define the restrictions of $\y^{(j)}$ and $\y^{(j+1)}_{(0)}$ to $2^{L\jj}$-length vectors,
\[
  \z\jj \coloneqq\left(y\jj_{\left(\mu\jj+r\right) \mods 2^j}\right)_{r=0}^{2^{L\jj}-1} \quad \text{and}\quad 
  \z\jjj_{(0)} \coloneqq\left(y\jjj_{\left(\mu\jj+r\right)\mods 2^j}\right)_{r=0}^{2^{L\jj}-1} .
\]
Then
\begin{equation}\label{z0}
  \z\jjj_{(0)}=\frac{1}{2}\left({\W_{(1)}\jj}^{-1}\cdot\F_{2^{L\jj}}^{-1}\cdot{\W_{(0)}\jj}^{-1}\left(\widehat{y}_{2^{J-L\jj}p+2^{J-j-1}}\right)_{p=0}^{2^{L\jj}-1}+\z\jj\right),
\end{equation}
where ${\W_{(0)}\jj}\coloneqq\diag\left( \omega_{2^{L^{(j)}}}^{p \mu^{(j)}}\right)_{p=0}^{2^{L^{(j)}}-1}$ and ${\W_{(1)}\jj} \coloneqq\diag \left( \omega_{2^{j+1}}^{\left(\mu\jj+r\right) \mods  2^{j}}\right)_{r=0}^{2^{L^{(j)}}-1}$, 
and the periodization $\y\jjj$ is completely determined by
\begin{align} \label{y0}
  \left(y\jjj_{(0)}\right)_{\left(\mu\jj+k\right)\mods 2^j}&=\begin{cases}
                                                            \left(z\jjj_{(0)}\right)_k, & k\in\left\{0,\dotsc,2^{L\jj}-1\right\}, \\
                                                            0, & \text{else,}
                                                           \end{cases} \\
  \left(y\jjj_{(1)}\right)_{2^j-1-\left(\mu\jj+k\right)\mods 2^j}&=\begin{cases}
                                                            \left(z\jjj_{(0)}\right)_k, & k\in\left\{0,\dotsc,2^{L\jj}-1\right\}, \\
                                                            0, & \text{else.}
                                                           \end{cases} 
                                                           \label{y1}
 \end{align}
\end{thm}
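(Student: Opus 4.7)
The plan is to recover $\y\jjj$ from $\y\jj$ and a handful of Fourier samples in three conceptual steps: (i)~reduce reconstruction of $\y\jjj$ to that of the restricted first half $\z\jjj_{(0)}$; (ii)~write $\z\jjj_{(0)}$ as the average of the known vector $\z\jj$ and an ``odd-part'' correction; and (iii)~compress that correction's inverse DFT to length $2^{L\jj}$ using the short support.

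Step~(i) is handled by Lemma~\ref{lem:periodization} and Theorem~\ref{thm:supp}: the symmetry $\y\jjj = \J_{2^{j+1}}\y\jjj$ expresses $\y\jjj_{(1)}$ as the reflection of $\y\jjj_{(0)}$, which gives (\ref{y1}); and the case~\ref{item:thm1_a} support analysis shows $\y\jjj_{(0)}$ is supported in $S\jj = \{(\mu\jj+r)\mods 2^j:r\in\{0,\dotsc,m\jj-1\}\}$, which gives (\ref{y0}). For step~(ii), I would let $\uu\coloneqq(y\jjj_{(k+2^j)\mods 2^{j+1}})_{k=0}^{2^{j+1}-1}$ be the cyclic shift of $\y\jjj$ by $2^j$ as in Lemma~\ref{lem:shift}, and set $\mathbf{d}\coloneqq\y\jjj-\uu$. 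Its first half is $\y\jjj_{(0)}-\widetilde{\y\jjj_{(1)}}$, where $\widetilde{\y\jjj_{(1)}}$ denotes $\y\jjj_{(1)}$ reindexed from zero, so combining with $\y\jj = \y\jjj_{(0)}+\widetilde{\y\jjj_{(1)}}$ from (\ref{eq:per_el}) and averaging yields $\z\jjj_{(0)} = \tfrac12\bigl(\z\jj + (d_{(\mu\jj+r)\mods 2^j})_{r=0}^{2^{L\jj}-1}\bigr)$. By Lemma~\ref{lem:shift} the DFT of $\mathbf{d}$ vanishes at even indices and equals $2(\widehat{\y\jjj})_k$ at odd indices, so expanding the inverse DFT over only these odd samples gives, for $k\in\{0,\dotsc,2^j-1\}$,
\begin{equation*}
 d_k \;=\; \frac{1}{2^j}\,\omega_{2^{j+1}}^{-k}\sum_{p=0}^{2^j-1}(\widehat{\y\jjj})_{2p+1}\,\omega_{2^j}^{-kp},
\end{equation*}
and the leading factor $\omega_{2^{j+1}}^{-k}$ under the index substitution $k=(\mu\jj+r)\mods 2^j$ matches the diagonal entries of ${\W_{(1)}\jj}^{-1}$.

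For step~(iii), I set $q_r\coloneqq\omega_{2^{j+1}}^{(\mu\jj+r)\mods 2^j}d_{(\mu\jj+r)\mods 2^j}$, stripping the modulation, and a change of summation variable shows $\mathbf{q} = \F_{2^j}^{-1}(\omega_{2^j}^{-\mu\jj p}(\widehat{\y\jjj})_{2p+1})_{p=0}^{2^j-1}$. Since $\y\jjj_{(0)}$ and $\widetilde{\y\jjj_{(1)}}$ both lie in $S\jj$, $\mathbf{q}$ is supported in $\{0,\dotsc,m\jj-1\}\subseteq\{0,\dotsc,2^{L\jj}-1\}$ and therefore coincides with its length-$2^{L\jj}$ periodization; by the time-periodization / frequency-subsampling principle underlying Lemma~\ref{lem:periodization}, that periodization is $\F_{2^{L\jj}}^{-1}$ applied to the subsampling of the above Fourier-side vector by the factor $2^{j-L\jj}$. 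Using $\omega_{2^j}^{-\mu\jj p\cdot 2^{j-L\jj}} = \omega_{2^{L\jj}}^{-\mu\jj p}$ produces the diagonal ${\W_{(0)}\jj}^{-1}$, while converting the subsampled $(\widehat{\y\jjj})_{2p\cdot 2^{j-L\jj}+1}$ to samples of $\widehat{\y}$ via Lemma~\ref{lem:periodization} produces the sampling indices $2^{J-L\jj}p+2^{J-j-1}$. Multiplying by ${\W_{(1)}\jj}^{-1}$ to undo the initial modulation recovers $(d_{(\mu\jj+r)\mods 2^j})_{r=0}^{2^{L\jj}-1}$, and averaging with $\z\jj$ as in step~(ii) yields (\ref{z0}). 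The principal obstacle I expect is the clean bookkeeping of the three concatenated index transformations---the cyclic shift by $\mu\jj$, the subsampling by $2^{j-L\jj}$, and the identity $(\widehat{\y\jjj})_q = \widehat{y}_{2^{J-j-1}q}$ from Lemma~\ref{lem:periodization}---so that they combine into exactly the stated matrices $\W_{(0)}\jj$, $\W_{(1)}\jj$ and sampling positions.
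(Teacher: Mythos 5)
Your proposal is correct, and it is essentially the paper's proof read in the opposite direction. The paper splits the DFT sum for the odd-indexed entries over the two halves of $\y\jjj$, uses $\omega_{2^{j+1}}^{(2k+1)2^j}=-1$ together with $\y\jj=\y\jjj_{(0)}+\y\jjj_{(1)}$ to obtain the forward system (\ref{eq:eqsys1}), restricts the columns to the known support, keeps only the rows $k=2^{j-L\jj}p$, factorizes the resulting square matrix as $\W_{(0)}\jj\cdot\F_{2^{L\jj}}\cdot\W_{(1)}\jj$ and inverts. You encode the very same sign flip through Lemma \ref{lem:shift} applied to the difference vector $\mathbf{d}=\y\jjj-\uu$, whose first half is exactly $2\y\jjj_{(0)}-\y\jj$ and whose DFT equals $2\widehat{y\jjj}_{2k+1}$ at odd indices and zero at even ones, and then construct the inverse map directly by stripping the two diagonal modulations and compressing the inverse DFT to length $2^{L\jj}$ via the periodization/subsampling duality behind Lemma \ref{lem:periodization}. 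All index bookkeeping checks out: $2^{J-j-1}\left(2^{j+1-L\jj}p+1\right)=2^{J-L\jj}p+2^{J-j-1}$ recovers the stated sampling positions, and $\omega_{2^j}^{-\mu\jj 2^{j-L\jj}p}=\omega_{2^{L\jj}}^{-\mu\jj p}$ produces ${\W_{(0)}\jj}^{-1}$. The only substantive difference is that your route never has to argue invertibility of the $2^{L\jj}\times2^{L\jj}$ system, since the solution is constructed explicitly; in exchange you must justify that the modulated vector $\mathbf{q}$ coincides with its own $2^{L\jj}$-periodization, which you do correctly using the support inclusion (\ref{eq:supp_incl}) and $L\jj\leq j$.
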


\begin{proof}
It suffices to consider the oddly indexed entries $\widehat{y\jjj}_{2k+1}$ of $\widehat{\y\jjj}$ for $k\in\left\{0, \ldots , 2^{j}-1\right\}$. With (\ref{eq:periodization}) we obtain
\begin{align*}
 \widehat{y\jjj}_{2k+1}=&\left(\left(\omega_{2^{j+1}}^{(2k+1)l}\right)_{l=0}^{2^{j+1}-1}\right)^{T} \begin{pmatrix}
                                                                              \y^{(j+1)}_{(0)} \\
                                                                              \y^{(j+1)}_{(1)}
                                                                             \end{pmatrix} \\
 =&\left(\left(\omega_{2^{j+1}}^{(2k+1)l}\right)_{l=0}^{2^j-1} \right)^{T}\y^{(j+1)}_{(0)}
 +\left( \left(\omega_{2^{j+1}}^{(2k+1)l}\right)_{l=2^j}^{2^{j+1}-1} \right)^{T}\left(\y^{(j)}-\y^{(j+1)}_{(0)}\right) \\
 =&\left( \left(\omega_{2^{j+1}}^{(2k+1)l}\right)_{l=0}^{2^j-1} \right)^{T}\y^{(j+1)}_{(0)}- \left( \left(\omega_{2^{j+1}}^{(2k+1)l}\right)_{l=0}^{2^j-1} \right)^{T} \left(\y^{(j)}-\y^{(j+1)}_{(0)}\right) \\
 =& \left( \left(\omega_{2^{j+1}}^{(2k+1)l}\right)_{l=0}^{2^j-1} \right)^{T} \left(2\y^{(j+1)}_{(0)}-\y^{(j)}\right). 
\end{align*}
Using Lemma \ref{lem:periodization} we find that
\begin{equation}\label{eq:eqsys1}
 \left(\widehat{y}_{2^{J-j-1}(2k+1)}\right)_{k=0}^{2^j-1}  
 =\left(\widehat{y^{(j+1)}}_{2k+1}\right)_{k=0}^{2^j-1}
 =\left(\omega_{2^{j+1}}^{(2k+1)l}\right)_{k,\,l=0}^{2^j-1}\left(2\y^{(j+1)}_{(0)}-\y^{(j)}\right), 
\end{equation}
so $\y^{(j+1)}_{(0)}$ can be computed from $\y^{(j)}$ and the oddly indexed entries of $\widehat{\y\jjj}$. 

By definition of $L\jj$ we have that $2^{L^{(j)}-1}<m^{(j)}\leq2^{L\jj}$, and the $m\jj$ nonzero entries of $\y\jj$ are taken into account by the $2^{L\jj}$-length restriction $\z\jj$ of $\y\jj$. Similarly, by (\ref{eq:per_el}), $\z\jjj_{(0)}$ takes into account the at most $m\jj$ nonzero entries of $\y^{(j+1)}_{(0)}$. 
We can therefore restrict (\ref{eq:eqsys1}) to the vectors $\z\jj$ and $\z\jjj_{(0)}$, which yields 
\begin{equation}
 \left(\widehat{y}_{2^{J-j-1}(2k+1)}\right)_{k=0}^{2^j-1} 
  =\left(\omega_{2^{j+1}}^{(2k+1)\left(\left(\mu\jj+r\right)\mods 2^j\right)}\right)_{k,\,r=0}^{2^j-1,\,2^{L\jj}-1}\left(2\z\jjj_{(0)}-\z\jj\right). \label{eq:res_eqsys}
\end{equation}  
As $\z\jjj_{(0)}$ and $\z\jj$ have length $2^{L\jj}$, it suffices to consider the $2^{L\jj}$ equations corresponding to $k\coloneqq2^{j-L\jj} p$ for $p\in\left\{0,\dotsc,2^{L\jj}-1\right\}$ in (\ref{eq:res_eqsys}). We obtain the factorization
  \begin{align}
  &\left(\widehat{y}_{2^{J-j-1}\left(2^{j+1-L\jj}p+1\right)}\right)_{p=0}^{2^{L\jj}-1} \notag \\
  =&\left(\omega_{2^{j+1}}^{\left(2^{j+1-L\jj}p+1\right)\left(\left(\mu\jj+r\right)\mods2^j\right)}\right)_{p,\,r=0}^{2^{L\jj}-1}
  \left(2\z\jjj_{(0)}-\z\jj\right) \notag \\
  =&\diag\left(\omega_{2^{L\jj}}^{p\mu\jj}\right)_{p=0}^{2^{L\jj}-1}\cdot 
  \left(\omega_{2^{L\jj}}^{pr}\right)_{p,\,r=0}^{2^{L\jj}-1} 
  \cdot\diag\left(\omega_{2^{j+1}}^{\left(\mu\jj+r\right)\mods2^j}\right)_{r=0}^{2^{L\jj}-1}
  \left(2\z\jjj_{(0)}-\z\jj\right) \notag \\
  =& \, \W_{(0)}\jj\cdot\F_{2^{L\jj}}\cdot\W_{(1)}\jj\left(2\z\jjj_{(0)}-\z\jj\right). \label{eq:final_eqsys}
 \end{align}
Since all matrices occurring in (\ref{eq:final_eqsys}) are invertible, we derive $\z\jjj_{(0)}$ as in (\ref{z0}).
Then  $\y\jjj$ is given as in (\ref{y0}) and (\ref{y1}) by definition of $\z\jjj_{(0)}$ and symmetry (\ref{eq:sym}). Note that if $L\jj=2^j$, then $\z\jj=\y\jj$ and $\z\jjj_{(0)}=\y\jjj_{(0)}$.
\end{proof}

\subsection{A DFT Procedure for Case \ref{item:thm1_b_bold}: Two-block Support} 
We still have to devise a procedure for reconstructing $\y\jjj$ from $\y\jj$ in Case \ref{item:thm1_b} of Theorem \ref{thm:supp}, i.e., if $\y\jj$ has a two-block support of the form  
\begin{equation} \label{eq:twosupp} 
 S\jj=I_{\mu\jj, \nu\jj} \cup I_{2^{j}-1-\nu\jj, 2^{j}-1-\mu\jj}=I_{\mu\jj, \mu\jj+m-1} \cup I_{2^{j}-m-\mu^{(j)}, 2^{j}-1-\mu^{(j)}}
\end{equation}
with two blocks of length $n\jj=m$, and $\nu^{(j)} =\mu^{(j)}+m-1$.
We recall that by Theorem \ref{thm:supp}, Case \ref{item:thm1_b}, the length $n\jj$ of both blocks in $\y\jj$ is the same as the length $n\jjj$ of both blocks in $\y^{(j+1)}$ and also the same as the support length $m$ of $\x$.
Furthermore,  all entries of $\y^{(j+1)}$ are already determined; we just have to find out whether the first support block of $\y\jj$ remains at the same position in $\y\jjj$ or whether its support is shifted by $2^j$. The other support block is obtained as the reflection of this block, according to the symmetry property (\ref{eq:sym}), see Figure \ref{fig2} (right). In order to decide which of these possibilities for $\y^{(j+1)}$ is true, we employ one oddly indexed nonzero entry of $\widehat{\y\jjj}$. In a first step we show that such a nonzero entry can be found efficiently.

\begin{lem}\label{lem:nonzero}
Let $N=2^{J-1}$ and $\x\in\RR^N$ have a one-block support of length $m<N$. Set $\y=(\x^T,(\J_N\x)^T)^T$ and assume that $\y$ satisfies $(\ref{eq:suppose})$. For $j\in\{0,\dotsc,J-1\}$ let $\y\jj$ be the $2^j$-length periodization of $\y$ according to $(\ref{eq:per_sum})$ and suppose that $\y\jj$ has the two-block support $S\jj$ as in $(\ref{eq:twosupp})$. Assume that we have access to all entries of $\widehat{\y} = \left(\widehat{y}_{k}\right)_{k=0}^{2^{J}-1}$. Then $\left(\widehat{y\jjj}_{2k+1}\right)_{k=0}^{2m-1}$ has at least one nonzero entry.
\end{lem}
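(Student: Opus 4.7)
The plan is to derive a Vandermonde-type contradiction. Suppose all $2m$ entries $\widehat{y^{(j+1)}}_{2k+1}$ for $k \in \{0,\dotsc,2m-1\}$ vanish, and show that this forces $\y^{(j+1)}$ to have fewer than $2m$ nonzero components, contradicting Theorem \ref{thm:supp} case \ref{item:thm1_b} together with the assumption (\ref{eq:suppose}).

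First I would set up the linear system. Using $\y^{(j)} = \y^{(j+1)}_{(0)}+\y^{(j+1)}_{(1)}$ exactly as in the derivation of (\ref{eq:eqsys1}), one finds
\[
 \left(\widehat{y^{(j+1)}}_{2k+1}\right)_{k=0}^{2^j-1}=\left(\omega_{2^{j+1}}^{(2k+1)l}\right)_{k,\,l=0}^{2^j-1}\w, \qquad \w\coloneqq 2\y^{(j+1)}_{(0)}-\y^{(j)}=\y^{(j+1)}_{(0)}-\y^{(j+1)}_{(1)}.
\]
Factoring $\left(\omega_{2^{j+1}}^{(2k+1)l}\right)=\F_{2^j}\cdot\diag\!\left(\omega_{2^{j+1}}^l\right)_{l=0}^{2^j-1}$ rewrites this as a DFT of the modulated vector $\widetilde{\w}\coloneqq\diag\!\left(\omega_{2^{j+1}}^l\right)\w\in\CC^{2^j}$.

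Next I analyze the support of $\widetilde{\w}$. By Theorem \ref{thm:supp} case \ref{item:thm1_b}, $\y^{(j+1)}$ has two blocks of length $m$ which by Lemma \ref{lem:support} are separated, and by the symmetry (\ref{eq:symmetry}) the second is the reflection of the first. Consequently $\y^{(j+1)}_{(0)}$ is supported on a single length-$m$ block inside $\{0,\dotsc,2^j-1\}$, and $\y^{(j+1)}_{(1)}=\J_{2^j}\y^{(j+1)}_{(0)}$ is supported on the reflected block; separation guarantees these two length-$m$ supports are disjoint. Hence $\w$ has exactly $2m$ nonzero entries (the entries of $\y^{(j+1)}_{(0)}$ with a positive sign and the entries of $\J_{2^j}\y^{(j+1)}_{(0)}$ with a negative sign), all of which are nonzero by (\ref{eq:suppose}); multiplication by the nonvanishing diagonal $\diag(\omega_{2^{j+1}}^l)$ preserves this, so $\widetilde{\w}$ also has $2m$ nonzero entries on the same support $\{s_1,\dotsc,s_{2m}\}\subset\{0,\dotsc,2^j-1\}$.

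Now I conclude by the Vandermonde argument. Suppose the first $2m$ odd-indexed Fourier coefficients vanish. Since $\widehat{y^{(j+1)}}_{2k+1}=(\F_{2^j}\widetilde{\w})_k$, this means $\sum_{i=1}^{2m}\bigl(\omega_{2^j}^{s_i}\bigr)^{k}\widetilde{w}_{s_i}=0$ for $k\in\{0,\dotsc,2m-1\}$. The coefficient matrix $V\coloneqq\bigl(\omega_{2^j}^{k s_i}\bigr)_{k=0,\,i=1}^{2m-1,\,2m}$ is a $2m\times 2m$ Vandermonde matrix in the pairwise distinct nodes $\omega_{2^j}^{s_1},\dotsc,\omega_{2^j}^{s_{2m}}$, hence invertible. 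This forces $\widetilde{w}_{s_i}=0$ for every $i$, contradicting the fact established above that $\widetilde{\w}$ has $2m$ nonzero entries at these positions. The access to $\widehat{y^{(j+1)}}_{2k+1}$ itself is guaranteed by Lemma \ref{lem:periodization} via $\widehat{y^{(j+1)}}_{2k+1}=\widehat{y}_{2^{J-j-1}(2k+1)}$.

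The main obstacle is really just the careful bookkeeping in the second step, namely that the two length-$m$ supports of $\y^{(j+1)}_{(0)}$ and $\J_{2^j}\y^{(j+1)}_{(0)}$ are disjoint (so that no cancellations occur in $\w$) and that (\ref{eq:suppose}) applied at level $j+1$ indeed makes every one of these $2m$ entries nonzero. Once that is pinned down, the Vandermonde step is immediate.
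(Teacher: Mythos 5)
Your proposal is correct and follows essentially the same route as the paper: assume the $2m$ odd-indexed entries vanish, reduce to a $2m\times 2m$ Vandermonde system in the nodes $\omega_{2^j}^{l}$ for $l$ in the support, observe that these nodes are pairwise distinct because the two length-$m$ blocks are separated (equivalently, reduction mod $2^j$ is injective on $S^{(j+1)}$), and derive a contradiction with the nonvanishing of the support entries guaranteed by (\ref{eq:suppose}). The only cosmetic difference is that you first fold to the half-length difference vector $\w=\y^{(j+1)}_{(0)}-\y^{(j+1)}_{(1)}$ before restricting to the support, while the paper restricts the full-length sum directly to $S^{(j+1)}$; these yield the identical linear system.
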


\begin{proof}  
Theorem \ref{thm:supp}, case \ref{item:thm1_b} yields that $\y\jjj$ has a two-block support as in (\ref{eq:thm1_b}), which we denote by $S\jjj$. Considering the first $2m$ oddly indexed entries of $\widehat{\y\jjj}$, we find
\begin{align}
\left(\widehat{y^{(j+1)}}_{2k+1}\right)_{k=0}^{2m-1} 
=& \left( \sum_{l \in S^{(j+1)}} \omega_{2^{j+1}}^{(2k+1)l}y_{l}^{(j+1)} \right)_{k=0}^{2m-1} \notag \\
=& \left( \omega_{2^{j+1}}^{(2k+1)l} \right)_{k=0,\,l \in S^{(j+1)}}^{2m-1}\left( y_{l}^{(j+1)} \right)_{l \in S^{(j+1)}} \notag \\
=& \left( \left(\omega_{2^j}^l\right)^k\right)_{k=0,\, l\in S\jjj}^{2m-1} \diag \left(\left( \omega_{2^{j+1}}^l \right)_{l \in S\jjj}\right)  \left( y_l\jjj \right)_{l \in S\jjj}.\label{eq:proof_proc_b}
\end{align}
Assume that $\left(\widehat{y\jjj}_{2k+1}\right)_{k=0}^{2m-1}=\boldsymbol{0}_{2m}$. The Vandermonde matrix $\left(\left(\omega_{2^j}^l\right)^k \right)_{k=0,\, l \in S\jjj}^{2m-1}$ is invertible if and only if the $\omega_{2^j}^l$ are pairwise distinct for all $l \in S\jjj$, or equivalently, if the $l \mods 2^j$ are pairwise distinct for all $l \in S\jjj$. Since $\y\jj$ already has a two-block support with separated blocks, this holds true. Hence, as $\left( y_l^{(j+1)} \right)_{l \in S^{(j+1)}} \neq {\mathbf 0}_{2m}$ by definition of $\y$ and the matrices occurring in (\ref{eq:proof_proc_b}) are invertible, we obtain a contradiction. For the implementation of this procedure set
\[
 k_0\coloneqq\underset{k\in\{0,\dotsc,2m-1\}}{\argmax}\left\{\left|\widehat{y}_{2^{J-j-1}(2k+1)}\right|\right\}.
\]
Then $\widehat{y\jjj}_{2k_0+1}\neq0$ and it is likely that this entry is not too close to zero, which is supported empirically by the numerical experiments in Section \ref{sec:numerics}.
\end{proof} 
Now we can show how the support of $\y\jjj$ can be determined from $\y\jj$ and one oddly indexed nonzero entry of $\widehat{\y\jjj}$.
\begin{thm}\label{thm:twoblocks}
Let $N=2^{J-1}$ and $\x\in\RR^N$ have a one-block support of length $m<N$. Set $\y=(\x^T,(\J_N\x)^T)^T$ and assume that $\y$ satisfies $(\ref{eq:suppose})$. For $j\in\{0,\dotsc,J-1\}$ let $\y\jj$ be the $2^j$-length periodization of $\y$ according to $(\ref{eq:per_sum})$ and suppose that $\y\jj$ has the two-block support $S\jj$ as in $(\ref{eq:twosupp})$. Assume that we have access to all entries of $\widehat{\y} = \left(\widehat{y}_{k}\right)_{k=0}^{2^{J}-1}$. Then $\y\jjj$ can be uniquely recovered from $\y\jj$ and one nonzero entry of $\left(\widehat{y}_{2^{J-j-1}(2k+1)} \right)_{k=0}^{2m-1}$.
\end{thm}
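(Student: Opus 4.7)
The plan is as follows. By Case \ref{item:thm1_b} of Theorem \ref{thm:supp}, knowledge of $\y\jj$ with two-block support as in $(\ref{eq:twosupp})$ leaves exactly two possibilities for $\y\jjj$, corresponding to $\mu\jjj=\mu\jj$ or $\mu\jjj=2^j-m-\mu\jj$. In either case the description in Case \ref{item:thm1_b} specifies every nonzero entry of $\y\jjj$ from the two blocks of $\y\jj$, so I can assemble two explicit candidate vectors $\y\jjj_{(1)}$ and $\y\jjj_{(2)}$ in $\mathcal{O}(m)$ operations. It therefore suffices to show that $\y\jjj_{(1)}$ and $\y\jjj_{(2)}$ can be distinguished from each other by one oddly indexed Fourier coefficient of $\y\jjj$.

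The key observation is that $\y\jjj_{(2)}$ is precisely the cyclic shift of $\y\jjj_{(1)}$ by $2^j$. Indeed, $\y\jjj_{(1)}$ occupies positions $\mu\jj,\dotsc,\mu\jj+m-1$ with the first block of $\y\jj$ and, by the symmetry $(\ref{eq:symmetry})$, positions $2^{j+1}-m-\mu\jj,\dotsc,2^{j+1}-1-\mu\jj$ with the reflection of that block. Adding $2^j$ to these indices modulo $2^{j+1}$ and using $m+\mu\jj\le 2^j$ yields the ranges $2^j+\mu\jj,\dotsc,2^j+\mu\jj+m-1$ in the upper half and $2^j-m-\mu\jj,\dotsc,2^j-1-\mu\jj$ in the lower half. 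A direct comparison of the values, together with the symmetry of $\y\jj$ that identifies the reflection of its first block with its second block, shows that the shifted vector coincides entry by entry with $\y\jjj_{(2)}$ as described in Case \ref{item:thm1_b}.

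Applying Lemma \ref{lem:shift} with $\uu=\y\jjj_{(1)}$ and $\uu^1=\y\jjj_{(2)}$ then gives, for every odd index $2k+1$,
\[
 \widehat{y\jjj_{(2)}}_{2k+1} = -\,\widehat{y\jjj_{(1)}}_{2k+1}.
\]
By Lemma \ref{lem:nonzero} there exists $k_0\in\{0,\dotsc,2m-1\}$ with $\widehat{y\jjj}_{2k_0+1}\neq 0$; in practice we may take $k_0=\argmax_{k\in\{0,\dotsc,2m-1\}}|\widehat{y}_{2^{J-j-1}(2k+1)}|$, and the exact value of $\widehat{y\jjj}_{2k_0+1}$ is available through $\widehat{y}_{2^{J-j-1}(2k_0+1)}$ by Lemma \ref{lem:periodization}. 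Since the two candidate Fourier coefficients at index $2k_0+1$ are nonzero and differ only by a sign, comparing the explicitly computed value $\widehat{y\jjj_{(1)}}_{2k_0+1}$ with the known true value $\widehat{y\jjj}_{2k_0+1}$ selects the correct candidate unambiguously, and hence $\y\jjj$ is uniquely recovered.

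The main obstacle is the shift identification between $\y\jjj_{(1)}$ and $\y\jjj_{(2)}$, which requires careful bookkeeping of indices and values modulo $2^{j+1}$ together with the reflection symmetry of both periodizations; once this is in place, the theorem reduces to a direct application of Lemmas \ref{lem:shift}, \ref{lem:nonzero}, and \ref{lem:periodization}.
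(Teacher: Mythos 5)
Your proposal is correct and follows essentially the same route as the paper: construct the two candidate vectors allowed by Theorem \ref{thm:supp}, Case \ref{item:thm1_b}, observe that one is the cyclic shift of the other by $2^j$, apply Lemma \ref{lem:shift} to get the sign flip on oddly indexed Fourier coefficients, and use the nonzero entry guaranteed by Lemma \ref{lem:nonzero} to decide between them. The only cosmetic difference is that you derive the shift relation from the two admissible first support indices, whereas the paper defines the second candidate directly as the shift of the first.
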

\begin{proof}
If $\y^{(j)}$ is known and has a two-block support, there are two possibilities for the periodized vector $\y\jjj$, one of which is obtained by shifting the other by $2^j$ by Theorem \ref{thm:supp}, case \ref{item:thm1_b}, see Figure \ref{fig2} (right). We denote these two vectors by $\uu^0= \left(u_{k}^{0}\right)_{k=0}^{2^{j+1}-1}$ and $\uu^1= \left(u_{k}^{1}\right)_{k=0}^{2^{j+1}-1}$, and obtain that
\begin{align*}
  u^0_k & \coloneqq\begin{cases}
           y\jj_k, & k\in\left\{\mu\jj,\dotsc,\mu\jj+m-1\right\}, \\
           y\jj_{k-2^j}, & k\in\left\{2^{j+1}-m-\mu\jj,\dotsc,2^{j+1}-1-\mu\jj\right\}, \\
           0, & \text{else}
          \end{cases} \qquad \text{and} \\
  u^1_{k} & \coloneqq u^0_{\left(2^j+k\right)\mods 2^{j+1}}, \quad k\in\{0,\dotsc,2^{j+1}-1\}.
\end{align*}  
Lemma \ref{lem:shift} implies that
 \begin{equation}\label{eq:comparison}
  \widehat{u^1}_{2k+1}= - \widehat{u^0}_{2k+1}, \qquad k\in\left\{0,\dotsc,2^j-1\right\},
 \end{equation}
 for all oddly indexed entries of $\widehat{u^0}$ and $\widehat{u^1}$. In order to decide whether $\y^{(j+1)} = \uu^{0}$ or $\y^{(j+1)} = \uu^{1}$ we compare a nonzero entry $\widehat{y\jjj}_{2k_0+1} = \widehat{y}_{2^{J-j-1}(2k_0+1)} \neq 0$ with the corresponding entry of  $\widehat{\uu^{0}}$. It follows from Lemma \ref{lem:nonzero} that $\widehat{y\jjj}_{2k_0+1}$ can be found by examining $2m$ entries. If $\widehat{u^0}_{2k_0+1} = \widehat{y\jjj}_{2k_0+1}$, we conclude that $\y^{(j+1)} = \uu^{0}$, and if $\widehat{u^0}_{2 k_0+1} = - \widehat{y\jjj}_{2k_0+1}$, then $\y^{(j+1)} = \uu^{1}$ by (\ref{eq:comparison}). Numerically, we set $\y^{(j+1)} = \uu^{0}$ if 
 \[
 \left| \widehat{u^0}_{2k_0+1} - \widehat{y\jjj}_{2k_0+1}\right| < \left| \widehat{u^0}_{2k_0+1} + \widehat{y\jjj}_{2k_0+1}\right|
 \]
and $\y^{(j+1)} = \uu^{1}$ otherwise. The required entry of $\widehat{\uu^0}$ can be computed from $\y\jj$ using $\mathcal{O}(m)$ operations,
\begin{align*}
  \widehat{u^0}_{2k_0+1}=&\sum_{l=0}^{2^{j+1}-1} \omega_{2^{j+1}}^{(2k_0+1)l}u^0_l
  =\sum_{l=\mu\jj}^{\mu\jj+m-1}\omega_{2^{j+1}}^{(2k_0+1)l} y\jj_l 
  +\sum_{l=2^{j+1}-m-\mu\jj}^{2^{j+1}-1-\mu\jj}\omega_{2^{j+1}}^{(2k_0+1)l}y\jj_{l-2^j}.
 \end{align*} 
\end{proof}

\section{The Sparse FFT and the Sparse Fast DCT}
In Section \ref{sec:dft_procedures} we have presented all procedures necessary to derive the new sparse FFT for vectors $\y \in {\mathbb R}^{2N}$ that have a reflected block support and satisfy (\ref{eq:suppose}). Using Lemma \ref{lem:dctx_from_yhat} we also obtain a new sparse fast DCT algorithm for vectors with one-block support. Note that neither of the procedures for reconstructing $\y\jjj$ from $\y\jj$ and $\widehat{\y}$ introduced above requires a priori knowledge of the length of the blocks in $\y$.
\subsection{A Sparse FFT for Vectors with Reflected Block Support}\label{sec:dft_alg}
Let us assume that $N=2^{J-1}$ and $\y \in {\mathbb R}^{2N}$ has a reflected block support of unknown block length $m< N$ and satisfies (\ref{eq:suppose}), i.e., there is no cancellation of nonzero entries in any of the periodization steps.
We suppose that we have  access to all entries of $\widehat{\y} \in {\mathbb C}^{2N}$.

The algorithm starts with the initial vector $\y^{(0)}=\sum_{l=0}^{2N-1}y_l=\widehat{y}_0$, which has a one-block support. For $j\in\{0, \ldots J-1\}$ we perform the following steps.
\begin{enumerate}
\labitem{1a}{item:alg_1a} If $\y^{(j)}$ possesses a one-block support, apply the DFT procedure given in Theorem \ref{thm:oneblock} to recover $\y^{(j+1)}$.
\labitem{1b}{item:alg_1b} If $\y^{(j)}$ possesses a two-block support, apply the DFT procedure given in Theorem \ref{thm:twoblocks} to recover $\y^{(j+1)}$.
\labitem{2}{item:alg_2} Detect the support structure of $\y\jjj$.
\end{enumerate}
A stable method to detect the support structure of $\y\jjj$ using ${\mathcal O}\left(m^{(j)}\right)$ operations is given in Section \ref{sec:detection}. Having reconstructed a vector $\y^{(l)}$ with two-block support, it follows from Theorem \ref{thm:supp} that all longer periodizations $\y^{(j)}$,  $j\in\{l+1, \ldots , J-1\}$, also possess a two-block support with the same block length, so for $j>l$ we always have to apply step \ref{item:alg_1b}.

If a lower bound $2^{b-1}\leq m$ on the block length of $\y$ is known, we can begin the algorithm with the computation of $\y^{(b)}$ by applying a $2^{b}$-length IFFT algorithm to
\[
 \widehat{\y^{(b)}}=\left(\widehat{y}_{2^{J-b}k}\right)_{k=0}^{2^{b}-1},
\]
and detect its support. Then we execute the above iteration steps for $j\in\{b, \ldots , J-1\}$. The complete procedure is summarized in Algorithm \ref{alg:dft}. 

\begin{algorithm}
\caption{Sparse FFT for Vectors with Reflected Block Support}
\label{alg:dft}
\begin{algorithmic}[1]
\renewcommand{\algorithmicrequire}{\textbf{Input:}}
\renewcommand{\algorithmicensure}{\textbf{Output:}}
\Require $\widehat{\y}$, where the sought-after vector $\y\in\RR^{2N}$, $N=2^{J-1}$, has a reflected block support of (unknown) length $m$ and satisfies (\ref{eq:suppose}), and noise threshold $\eps$. If a lower bound on $m$ is known a priori, let $b\in\NN_0$ such that $2^{b-1}\leq m$, otherwise $b=0$.

\State $\y^{(b)}\gets\mathbf{IFFT}\left[\left(\widehat{y}_{2^{J-b}k}\right)_{k=0}^{2^b-1}\right]$ and, if $b>0$, detect its support structure. \label{line:init}
\For{$j$ from $b$ to $J-1$}
\Statex \begin{center}\textsc{Recovery Step for Periodizations with One-block Support} \end{center}

\If{$\y\jj$ has a one-block support}
\If{$m\jj>2^{j-1}$}
\State $\aaa\gets\diag\left(\left(\omega_{2^{j+1}}^{-l}\right)_{l=0}^{2^j-1}\right)\mathbf{IFFT}\left[\left(\widehat{y}_{2^{J-j-1}(2k+1)}\right)_{k=0}^{2^j-1}\right]$ \label{line:oneblock_direct}
\State $\left(\y\jjj_{(0)}\right)_k\gets\begin{cases}
                                         \frac{1}{2}\re\left(\y\jj+\aaa\right)_k, & \frac{1}{2}\re\left(\y\jj+\aaa\right)_k>\eps, \\
                                         0, & \text{else,} 
                                        \end{cases}$\quad $k=0,\dotsc,2^j-1$ \label{line:oneblock_direct_val1}
\State $\y\jjj_{(1)}\gets\J_{2^j}\y\jjj_{(0)}$\label{line:oneblock_direct_val2}
\ElsIf{$m\jj\leq2^{j-1}$}
\State Set $L\jj=\left\lceil\log_2m\jj\right\rceil$.
\State Set $\z\jj=\left(y\jj_{\left(\mu\jj+r\right)\mods2^j}\right)_{r=0}^{2^{L\jj}-1}$ and $\vv=\left(\widehat{y}_{2^{J-L\jj}p+2^{J-j-1}}\right)_{p=0}^{2^{L\jj}-1}$.\label{line:oneblock_short0}
\State $\aaa\gets\diag\left(\left(\omega_{2^{j+1}}^{-\left(\mu\jj+r\right)\mods2^j}\right)_{r=0}^{2^{L\jj}-1}\right)\mathbf{IFFT}\left[\diag\left(\omega_{2^{L\jj}}^{-p\mu\jj}\right)_{p=0}^{2^{L\jj}-1}\vv\right]$ \label{line:oneblock_short}
\State $\left(\z\jjj_{(0)}\right)_k\gets\begin{cases} 
                    \frac{1}{2}\re\left(\z\jj+\aaa\right)_k, & \frac{1}{2}\re\left(\z\jj+\aaa\right)_k>\eps, \\
                    0, & \text{else,}
                   \end{cases}\,k=0,\dotsc,2^{L\jj}-1$\label{line:oneblock_short_rest1}
%\State $\z\jjj_{(1)}\gets\J_{2^{L\jj}}\z\jjj_{(0)}$
\State $\left(y\jjj_{(0)}\right)_{\left(\mu\jj+k\right)\mods 2^j}\gets\begin{cases}
                                                                    \left(z\jjj\right)_k, & k\in\left\{0,\dotsc,2^{L\jj}-1\right\} \\
                                                                    0, & \text{else}
                                                                      \end{cases}$
\State $\left(y\jjj_{(1)}\right)_{2^j-1-\left(\mu\jj+k\right)\mods 2^j}\gets\begin{cases}
                                                                    \left(z\jjj\right)_k, & k\in\left\{0,\dotsc,2^{L\jj}-1\right\} \\
                                                                    0, & \text{else}
                                                                      \end{cases}$ \label{line:oneblock_short_rest2}
\EndIf
\Statex \begin{center}\textsc{Recovery Step for Periodizations with Two-block Support} \end{center}
\ElsIf{$\y\jj$ has a two-block support with block length $n\jj$}
\State Find $\widehat{y}_{2^{J-j-1}(2k_0+1)}\neq0$. \label{line:non-zero}
\State $\widehat{u^0}_{2 k_0+1}\gets\sum\limits_{l=\mu\jj}^{\mu\jj+n\jj-1}\omega_{2^{j+1}}^{(2 k_0+1)l}y\jj_l+\sum\limits_{l=2^{j+1}-n\jj-\mu\jj}^{2^{j+1}-1-\mu\jj}\omega_{2^{j+1}}^{(2 k_0+1)l}y\jj_{l-2^j}$ \label{line:u}
\State $\lambda\jjj\gets\begin{cases}
                         \mu\jj, & \left|\widehat{u^0}_{2 k_0+1}-\widehat{y}_{2^{J-j-1}(2 k_0+1)}\right|<\left|\widehat{u^0}_{2 k_0+1}+\widehat{y}_{2^{J-j-1}(2 k_0+1)}\right| \\
                         2^j+\mu\jj, & \text{else}
                        \end{cases}$ \label{line:lambda}
\State $y\jjj_k\gets\begin{cases}
                     y\jj_{k\mods2^j}, & k\in\left\{\lambda\jjj,\dotsc,\lambda\jjj+n\jj-1\right\}, \\
                     y\jj_{k\mods2^j}, & k\in\left\{2^{j+1}-n\jj-\lambda\jjj,\dotsc,2^{j+1}-1-\lambda\jjj\right\}, \\
                     0, & \text{else}
                    \end{cases}$ \label{line:twoblocks}
\EndIf
\State Detect the support structure of $\y\jjj$, i.e., find $\mu\jjj$ and $m\jjj$ or $n\jj$. \label{line:detection}
\EndFor
\Ensure $\y$
\end{algorithmic}
\end{algorithm}

We show that the runtime and sampling complexity of Algorithm \ref{alg:dft} are sublinear in the vector length $2N$.
\begin{thm}\label{thm:runtime}
Let $N=2^{J-1}$ and $\x\in\RR^N$ have a one-block support of length $m<N$. Set $\y=(\x^T,(\J_N\x)^T)^T$ and assume that $\y$ satisfies $(\ref{eq:suppose})$. Further, assume that there is no a priori knowledge of the support length $m$ of $\x$. Then Algorithm $\ref{alg:dft}$ has a runtime of $\mathcal{O}\left(m\log m\log \frac{2N}{m}\right)$ and uses $\mathcal{O}\left(m\log \frac{2N}{m}\right)$ samples of $\widehat{\y}$.
\end{thm}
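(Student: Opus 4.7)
The plan is to bound the arithmetic cost and number of samples used in the initial step together with each iteration $j \in \{b, \dotsc, J-1\}$ of the main loop in Algorithm \ref{alg:dft} separately, and then sum these contributions. By Lemma \ref{lem:support}, every periodization $\y^{(j)}$ has either a one-block support of length $m^{(j)} \le 2m$ or a two-block support with blocks of length $m$, so the work per iteration can be uniformly bounded in terms of $m$.

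First, I would handle the initialization in line \ref{line:init}. If $b \ge 1$, the assumption $2^{b-1} \le m$ forces $2^b \le 2m$, so the IFFT of length $2^b$ costs $\mathcal{O}(2^b \log 2^b) = \mathcal{O}(m \log m)$ and uses at most $2m$ samples; if $b = 0$ the step is trivial. Next I would bound a single iteration. For $\y^{(j)}$ with one-block support, Theorem \ref{thm:oneblock} reduces the computation of $\y^{(j+1)}$ to one IFFT of length $2^{L^{(j)}}$ plus $\mathcal{O}(2^{L^{(j)}})$ scalar multiplications in the diagonal pre- and post-scalings of lines \ref{line:oneblock_short0}-\ref{line:oneblock_short_rest2}. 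Since $L^{(j)} = \lceil \log_2 m^{(j)} \rceil \le \lceil \log_2(2m) \rceil$, this costs $\mathcal{O}(m \log m)$ using $\mathcal{O}(m)$ samples. The direct variant in lines \ref{line:oneblock_direct}-\ref{line:oneblock_direct_val2}, invoked when $m^{(j)} > 2^{j-1}$, applies an IFFT of length $2^j$, but in this sub-case $2^j < 2 m^{(j)} \le 4m$, so the same bound holds. For $\y^{(j)}$ with two-block support, Lemma \ref{lem:nonzero} and Theorem \ref{thm:twoblocks} show that the work consists of inspecting $2m$ oddly indexed samples (line \ref{line:non-zero}), evaluating $\widehat{u^0}_{2k_0+1}$ as a sum of $2m$ terms (line \ref{line:u}), and copying the nonzero entries of $\y^{(j)}$ into $\y^{(j+1)}$ (line \ref{line:twoblocks}); this costs $\mathcal{O}(m)$ operations and uses $\mathcal{O}(m)$ samples. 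The support detection in line \ref{line:detection} contributes an additional $\mathcal{O}(m^{(j)}) = \mathcal{O}(m)$, deferred to Section \ref{sec:detection}.

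The main subtlety is summing over $j$ so that the logarithm becomes $\log(2N/m)$ rather than $\log(2N)$. If $b \ge \lceil \log_2(2m) \rceil$, only iterations with $2^j \ge 2m$ occur, each iteration costs $\mathcal{O}(m \log m)$, and there are $J - b = \mathcal{O}(\log(2N/m))$ of them, giving the claimed total directly. If $b < \lceil \log_2(2m) \rceil$, in particular if $b = 0$, the early iterations use IFFTs of length $2^j < 2m$, and I would estimate their combined cost by the geometric sum $\sum_{j=b}^{\lceil \log_2(2m) \rceil} 2^j \log 2^j = \mathcal{O}(m \log m)$, absorbing all of them into a single term of the required order; the remaining $\mathcal{O}(\log(2N/m))$ iterations each cost $\mathcal{O}(m \log m)$ as before. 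Combining the two regimes with the initial step yields the total runtime $\mathcal{O}(m \log m \log(2N/m))$. Applying the same case split to the per-iteration sample counts $\mathcal{O}(m)$ (respectively $\mathcal{O}(2^j)$ for the small-$j$ regime, summing geometrically to $\mathcal{O}(m)$) produces the bound $\mathcal{O}(m \log(2N/m))$ on the number of samples of $\widehat{\y}$.
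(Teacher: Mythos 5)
Your proposal is correct and follows essentially the same route as the paper's proof: split the iterations at the threshold $j \approx \log_2(2m)$ (the paper fixes $L$ with $2^{L-1} < 2m \le 2^L$), absorb the early iterations' IFFTs of length $2^j$ into a geometric sum of order $\mathcal{O}(m\log m)$ operations and $\mathcal{O}(m)$ samples, and bound each of the remaining $\mathcal{O}\left(\log\frac{2N}{m}\right)$ iterations by $\mathcal{O}(m\log m)$ operations and $\mathcal{O}(m)$ samples in the worst (one-block) case. Your explicit handling of the case $b>0$ is a minor addition not needed under the theorem's hypothesis of no a priori knowledge, but it does no harm.
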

\begin{proof}
1. Note that the support $S^{(J)}$ of $\y=\y^{(J)}$ has at most cardinality $2m$. Let $2^{L-1} < 2m \le 2^{L} \le 2^{J}$. For $j\in\{0, \ldots , L-1\}$ the vector $\y\jj$ has necessarily a one-block support of length $2^{j-1} \le m^{(j)}\leq 2^{j}$, and we have to apply the procedure of Theorem \ref{thm:oneblock}, where the computation of $\y^{(j+1)}_{(0)} = {\mathbf z}^{(j+1)}_{(0)}$ requires an IFFT of length $2^{j}$ with $\mathcal{O}\left(2^j\log2^j\right)$ operations according to line \ref{line:oneblock_direct} of Algorithm \ref{alg:dft}.
To determine the nonzero entries of $\y^{(j+1)}$ in lines \ref{line:oneblock_direct_val1} and \ref{line:oneblock_direct_val2}, and to detect its support structure in line \ref{line:detection}, only $\mathcal{O}\left(2^j \right)$ operations are needed, as we will show in Section \ref{sec:detection}.
Thus, the iteration steps for $j\in\{0, \ldots , L-1\}$ have a runtime complexity of 
\[
 {\mathcal O} \left(\sum_{j=0}^{L-1} 2^{j} \log 2^{j} \right) = {\mathcal O}\left(2^{L}(L-2)\right) .
\]
For $j\in\{L, \ldots , J-1\}$ we have to apply either the recovery step for periodizations with one-block support or the recovery step for periodizations with two-block support.
If $\y^{(j)}$ has a one-block support of length $m^{(j)}$, then $m \le m\jj \le 2m$ and $2^{L-1}< m^{(j)} \le 2^{L}$. 
The computation of ${\mathbf z}^{(j+1)}_{(0)}$ in lines \ref{line:oneblock_short} and \ref{line:oneblock_short_rest1} requires an IFFT of length at most $2^{L}$ and further operations of complexity ${\mathcal O}\left(2^{L}\right)$. In order to detect the support structure in line \ref{line:detection} at most ${\mathcal O}\left(2^{L}\right)$ operations are necessary. Altogether, we require ${\mathcal O}\left(2^{L} \log 2^{L}\right)$ operations for such an iteration step.

If $\y^{(j)}$ has a two-block support with block length $n\jj=m$, executing lines \ref{line:non-zero} to \ref{line:twoblocks} requires ${\mathcal O}(m)$ operations, and the support structure of $\y^{(j+1)}$ is already completely determined. However, in the worst case, we have to apply the recovery step for periodizations with one-block support for every $j\in\{L, \ldots , J-1\}$, and thus need ${\mathcal O}\left((J-L) 2^{L} \log 2^{L}\right)$ operations for these steps. Adding the arithmetical complexities for both cases yields an overall runtime of  
\[
 {\mathcal O}\left(( 2^{L}(L-2)  + (J-L) 2^{L} L\right) = {\mathcal O} \left( m \log m \log\frac{2N}{m} \right),
\]
where we have used that $2m \le 2^{L} <4m$. In particular, if $m$ approaches $N$, the algorithm has a runtime of $\mathcal{O}(N\log N)$, which is the same as the runtime of a full length FFT.

2. For $j\in\{0,\dotsc,L-1\}$ it follows from $2^{L-1}< 2m \le 2^{L}$ that the computation of $\y\jjj$ requires all $2^{j-1}$ samples of $\left(\widehat{y\jjj}_{2k+1}\right)_{k=0}^{2^{j-1}-1}$. This implies that we need the entire vector $\widehat{\y^{(L)}}$ to recover $\y^{(L)}$ iteratively. The remaining iteration steps with $j\in\{L,\dotsc,J-1\}$ need $2^{L\jj}=\mathcal{O}\left(m\jj\right)$ samples if $\y\jj$ has a one-block support. In the case of a two-block support it suffices to examine $2m$ samples of $\widehat{\y}$ to find a nonzero one by Lemma \ref{lem:nonzero}. Hence, the algorithm has a sampling complexity of 
\[
 \mathcal{O}\left(\sum_{j=0}^{L-1}2^j+(J-L)2^L\right)=\mathcal{O}\left(2^L+(J-L)2^L\right)=\mathcal{O}\left(m\log\frac{2N}{m}\right).
\]
\end{proof}
\subsection{Detecting the Support Sets}\label{sec:detection}
Algorithm \ref{alg:dft} relies heavily on an efficient detection of the support structure of the periodized vector $\y^{(j)}$.
Let us assume that $\y\jj$ and its support are known, and that we computed $\y\jjj$ with the appropriate method in Algorithm \ref{alg:dft}. In order to detect the support of $\y\jjj$ in line \ref{line:detection} of Algorithm \ref{alg:dft} we have to distinguish the five cases already considered in Theorem \ref{thm:supp}. Note that for noisy data the found block lengths $n\jjj$ for $\y\jjj$ with two-block support might not be the same as the exact block length $m$ of $\x$.
\begin{enumerate}[wide, labelwidth=!, labelindent=0pt]
 \labitemc{$\text{A}_\text{1}$}{item:detect_middle} %\label{item:detect_unwrapped} 
 $\y\jj$ has the one-block support $S\jj=I_{\mu^{(j)},2^{j}-1-\mu^{(j)}}$ of length $m^{(j)}<2^j$ centered around the middle of the vector, i.e., around $2^{j-1}-1$ and $2^{j-1}$.
 
 Then $\y\jjj$ has a two-block support of length $n\jjj\leq m\jj$ and the support of the first block is a subset of 
 $S\jj$ by Theorem \ref{thm:supp}, case \ref{item:thm1_middle}. We define  
 \[
  T\jjj_{(0)}\coloneqq\left\{k\in\left\{\mu\jj,\mu\jj+1,\dotsc,2^j-1-\mu\jj\right\}:y\jjj_k>\eps\right\}
  \eqqcolon\left\{t_1,\dotsc,t_{K}\right\},
 \]
 where $t_1<\dotsb<t_{K}$, and $\eps>0$ depends on the noise level. We have to check $m\jj$ entries of $\y\jjj$ in order to determine $S\jjj$. 
 Then we choose
 \[
  \mu\jjj \coloneqq t_1 \quad\text{and}\quad n\jjj\coloneqq t_{K}-t_1+1.
 \] 
 \labitemc{$\text{A}_\text{2}$}{item:detect_full}
 If $j<J-1$, $\mu^{(j)}=0$ and $m\jj=2^j$, Theorem \ref{thm:supp}, case \ref{item:thm1_full} yields that $\y\jjj$ has a one-block support whose location and length are unknown. As $\y\jjj$ is symmetric, the indices of its significantly large entries are
  \[
   T\jjj\coloneqq\left\{t_1,\dotsc,t_K,2^{j+1}-1-t_K,\dotsc,2^{j+1}-1-t_1\right\}\eqqcolon\left\{t_1,\dotsc,t_{2K}\right\},
  \]
  where $t_K\leq2^j-1$ and $t_{K+1}\geq2^j$. If $t_{2K}-t_1+1>2^j$, we set $\mu\jjj\coloneqq 0$ and $m\jjj\coloneqq 2^{j+1}$. Otherwise we need to detect whether the support block is centered around the middle or the boundary of $\y\jjj$. We define
  \[
   d_0\coloneqq t_{K+1}-t_K \quad\text{and}\quad d_1\coloneqq\left(t_1-t_{2K}\right)\mods 2^{j+1}.
  \]
  If $d_0<d_1$, then $\y\jjj$ has a one-block support centered around $2^j-1$ and $2^j$. If $d_0>d_1$, the support block is centered around 0 and $2^{j+1}-1$. If $d_0=d_1$, $\y\jjj$ must have full support of length $m\jjj=2^{j+1}$, so we can conclude
  \[
   \mu\jjj\coloneqq\begin{cases}
            t_1, & d_0<d_1, \\
            t_{K+1}, & d_0>d_1, \\
            0, & d_0=d_1,
           \end{cases} \quad\text{and}\quad  
           m\jjj\coloneqq\begin{cases}
            t_{2K}-t_1+1, & d_0<d_1, \\
            2^{j+1}-t_{K+1}+t_K+1, & d_0>d_1, \\
            2^{j+1}, & d_0=d_1.
           \end{cases}
  \]
 In the case that $j=J-1$ and $m^{(J-1)}=2^{J-1}$, let
 \begin{align*}
  T^{(J)}_{(0)}&\coloneqq\left\{k\in\left\{0,\dotsc,2^{J-2}-1\right\}:y_k>\eps\right\}\eqqcolon\left\{t_1,\dotsc,t_K\right\} \qquad \text{and}\quad \\
  T^{(J)}_{(1)}&\coloneqq\left\{k\in\left\{2^{J-2},\dotsc,2^{J-1}-1\right\}:y_k>\eps\right\}\eqqcolon\left\{u_1,\dotsc,u_L\right\}.
 \end{align*}
 If $T^{(J)}_{(0)}=\emptyset$, then $\y$ has a one-block support centered around the middle with
 \[
  \mu^{(J)}\coloneqq u_1 \quad\text{and}\quad m^{(J)}\coloneqq2\left(2^{J-1}-u_1\right)=2m.
 \]
 If $T^{(J)}_{(1)}=\emptyset$, then $\y$ has a one-block support centered around the boundary with
 \[
  \mu^{(J)}\coloneqq 2^J-1-t_K \quad\text{and}\quad m^{(J)}\coloneqq2\left(t_K+1\right)=2m.
 \]
 Otherwise there are three possibilities for the support of $\y$, where we do not know the correct one a priori:
 \begin{enumerate}
  \item $S^{(J)}\coloneqq I_{t_1,2^J-1-t_1}$,
  \item $S^{(J)}\coloneqq I_{2^J-1-u_L,u_L}$,
  \item $\y$ has a two-block support with two blocks of possibly different lengths and unknown positions.
 \end{enumerate}
 
 \labitemc{$\text{A}_{\text{3}}$}{item:detect_bound} $j< J-1$ and $\y\jj$ has the one-block support $S\jj=I_{\mu^{(j)},2^{j}-1-\mu^{(j)}}$  of length $m\jj<2^j$ centered around the boundary of the vector, i.e., around $2^j-1$ and $0$.
  
  It follows from Theorem \ref{thm:supp}, case \ref{item:thm1_bound} that $\y\jjj$ has a one-block support of length $m\jjj \coloneqq m\jj$ with $\mu\jjj=\mu\jj$ or $\mu\jjj=2^j+\mu\jj$. We compare the entries at the possible locations of the support block and set
 \[
  e_0\coloneqq\sum_{k=\mu\jj}^{\mu\jj+m\jj-1}\left|y\jjj_k\right| \quad\text{and}\quad 
  e_1\coloneqq\sum_{k=2^j+\mu\jj}^{\left(2^j+\mu\jj+m\jj-1\right)\mods2^{j+1}}\left|y\jjj_k\right|.
 \] 
 Since for exact data one of the sums has only vanishing summands, we choose
 \[
  \mu\jjj \coloneqq\begin{cases}
           \mu\jj, & e_0>e_1 \\
           2^j+\mu\jj, & e_0<e_1.
          \end{cases}
 \] 
 \labitemc{$\text{A}_{\text {4}}$}{item:detect_bound_J} $j=J-1$ and  $\y^{(J-1)}$ has the one-block support $S^{(J-1)}=I_{\mu^{(J-1)},2^{J-1}-1-\mu^{(J-1)}}$ of length $m^{(J-1)}<2^{J-1}$ centered around the boundary, i.e., around $2^{J-1}-1$ and $0$.
  
 Theorem \ref{thm:supp}, case \ref{item:thm1_bound_J} implies that $\y^{(J)}=\y$ has either a two-block support with two separated blocks of possibly different lengths or, as a boundary case, a one-block support, where one of the two blocks in (\ref{eq:thm1_bound_J}) vanishes. In case of two blocks, one is centered  around the middle of the vector, i.e., around $2^{J-1}-1$ and $2^{J-1}$, and its support is a subset of $I_{\mu^{(J-1)}, 2^{J}-1-\mu^{(J-1)}}$, and the other is centered around the boundary of the vector, i.e., around $2^J-1$ and $0$, and its support is a subset of $I_{2^{J-1}+\mu^{(J-1)}, 2^{J-1}-1- \mu^{(J-1)}}$. If $\y$ has a one-block support, its support is a subset of one of the two index sets above. Since the support blocks have even length, set
 \begin{align*}
  T_{(0)}^{(J)}&\coloneqq\left\{k\in\left\{\mu^{(J-1)},\dotsc,2^{J-1}-1\right\}:y_k>\eps\right\}\eqqcolon\left\{t_1,\dotsc,t_K\right\} \quad \text{and} \\
  T_{(1)}^{(J)}&\coloneqq\left\{k\in\left\{2^{J-1}+\mu^{(J-1)},\dotsc,2^{J}-1\right\}:y_k>\eps\right\}\eqqcolon\left\{u_1,\dotsc,u_L\right\}.
 \end{align*}
If $T_{(0)}^{(J)}=\emptyset$, then $\y$ has a one-block support centered around the boundary, and we set
 \[
  \mu^{(J)}\coloneqq u_1 \quad\text{and}\quad m^{(J)}\coloneqq2\cdot\left(2^J-u_1\right)=2m
 \]
 to obtain the support set $S^{(J)}\coloneqq I_{\mu^{(J)}, 2^J-1-\mu^{(J)}}$ of $\y$.
 If $T_{(1)}^{(J)}=\emptyset$, then $\y$ has a one-block support centered around the middle of the vector, and we let
 \[
  \mu^{(J)}\coloneqq t_1 \quad\text{and}\quad m^{(J)}\coloneqq 2\cdot\left(2^{J-1}-t_1\right)=2m,
 \] 
 implying that the support set of $\y$ is $S^{(J)}\coloneqq I_{\mu^{(J)}, \mu^{(J)}+ m^{(J)} - 1}$.
 If neither set is empty, $\y$ has a two-block support with two separated blocks of possibly different lengths, see Figure \ref{fig2} (left). We denote the first index of the block centered around the middle by $\mu^{(J)}$ and the first index of the block centered around the boundary by $\eta^{(J)}$ and set
 \[
  \mu^{(J)}\coloneqq t_1 \quad\text{and}\quad \eta^{(J)} \coloneqq u_{1},
 \]
 We obtain the support set $S^{(J)}\coloneqq I_{\mu^{(J)},2^{J}-1-\mu^{(J)}} \cup I_{\eta^{(J)},2^J-1-\eta^{(J)}}$, where the block centered around the middle of the vector has length $n^{(J)}_{(0)}\coloneqq2\left(2^{J-1}-t_1\right)$ and the block centered around the boundary has length $n^{(J)}_{(1)}\coloneqq2\left(2^J-u_1\right)$.
 
 \labitemc{B}{item:detect_b}  $\y\jj$ has the two-block support $S\jj=I_{\mu^{(j)},\nu^{(j)}} \cup I_{2^{j}-1-\nu^{(j)},2^{j}-1-\mu^{(j)}}$ with block length $n\jj=m$.

Let  $\lambda\jjj$ be the support index  computed in line \ref{line:lambda} of Algorithm \ref{alg:dft}. We obtain 
 \[
  \mu\jjj\coloneqq\begin{cases}
           \mu\jj, & \lambda\jjj=\mu\jj, \\
           2^j-m-\mu\jj, & \lambda\jjj=2^j+\mu\jj,
          \end{cases}
 \]
 according to Theorem \ref{thm:supp}, case \ref{item:thm1_b} and Figure \ref{fig2} (right).
\end{enumerate}

\subsection{A Sparse Fast DCT for Vectors with One-block Support}\label{sec:dct_alg}
We now apply the sparse FFT algorithm for vectors $\y \in {\mathbb R}^{2N}$ with reflected block support presented in Section \ref{sec:dft_alg} to derive a sparse fast DCT algorithm for vectors with one-block support.
Recall that by Lemma \ref{lem:dctx_from_yhat} the Fourier transform of the vector $\y=(\x^T,(\J_N\x)^T)^T$ is completely determined by $\cxx$. Hence, we can compute $\y$ from $\widehat{\y}$ with the help of Algorithm \ref{alg:dft} if $\cxx$ is known. By construction, $\x$ is then given as the first half of $\y$. Since Algorithm \ref{alg:dft} is adaptive, no a priori knowledge of the support length of $\x$ is required. The resulting sparse fast DCT procedure is summarized in Algorithm \ref{alg:dct}. 
\begin{algorithm}
\caption{Sparse Fast DCT for Vectors with One-block Support}
\label{alg:dct}
\begin{algorithmic}[1]
\renewcommand{\algorithmicrequire}{\textbf{Input:}}
\renewcommand{\algorithmicensure}{\textbf{Output:}}
\Require $\cxx$, where the sought-after vector $\x\in\RR^{N}$, $N=2^{J-1}$, has a one-block support of (unknown) length $m$ and $\y=(\x^T,(\J_N\x)^T)^T$ satisfies (\ref{eq:suppose}), and noise threshold $\eps$. If a lower bound on $m$ is known a priori, let $b\in\NN_0$ such that $2^{b-1}\leq m$, otherwise $b=0$.

\State Compute $\widehat{y}_k=\begin{cases}
                           \frac{\sqrt{2N}}{\eps_N(k)}\omega_{4N}^{-k}\cdot\cx_k, & k\in\{0,\dotsc,N-1\}, \\
                           0, & k=N, \\
                           -\frac{\sqrt{2N}}{\eps_N(2N-k)}\omega_{4N}^{-k}\cdot\cx_{2N-k}, & k\in\{N+1,\dotsc,2N-1\}
                          \end{cases}$ \label{line:init_dct} \newline
        if the sample $\widehat{y}_{k} $ is needed in Algorithm  \ref{alg:dft}.              
\State $\y\gets\text{ Algorithm \ref{alg:dft}}\left[\widehat{\y},b\right]$ \label{line:apply}
\State $\x\gets\y_{(0)}=\left(y_k\right)_{k=0}^{N-1}$ \label{line:output}
\Ensure $\x$
\end{algorithmic}
\end{algorithm}
\begin{thm}
 Let $N=2^{J-1}$ and $\x\in\RR^N$ have a one-block support of length $m<N$. Set $\y=(\x^T,(\J_N\x)^T)^T$ and assume that $\y$ satisfies $(\ref{eq:suppose})$. Further, assume that there is no a priori knowledge of the support length $m$ of $\x$. Then Algorithm $\ref{alg:dct}$ has a runtime of $\mathcal{O}\left(m\log m\log \frac{2N}{m}\right)$ and uses $\mathcal{O}\left(m\log \frac{2N}{m}\right)$ samples of $\cxx$.
\end{thm}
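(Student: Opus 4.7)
The plan is to reduce the statement directly to Theorem \ref{thm:runtime}, using Lemma \ref{lem:dctx_from_yhat} as the bridge between samples of $\cxx$ and samples of $\widehat{\y}$. Since Algorithm \ref{alg:dct} only does three things--convert DCT samples to DFT samples on demand (line \ref{line:init_dct}), invoke Algorithm \ref{alg:dft} (line \ref{line:apply}), and read off the first $N$ entries of $\y$ (line \ref{line:output})--the complexity analysis should essentially inherit from the FFT algorithm, with only a constant-per-sample overhead to account for.

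First I would verify correctness: by Lemma \ref{lem:dctx_from_yhat}, equation \eqref{converse}, every sample $\widehat{y}_k$ of $\F_{2N}\y$ is uniquely determined by at most one entry of $\cxx$ (namely $\cx_k$ for $k\in\{0,\dotsc,N-1\}$, $0$ for $k=N$, and $\cx_{2N-k}$ for $k\in\{N+1,\dotsc,2N-1\}$), computable in $\mathcal{O}(1)$ arithmetic operations. Thus whenever Algorithm \ref{alg:dft} queries a sample $\widehat{y}_k$, Algorithm \ref{alg:dct} can supply it from a single sample of $\cxx$. Since $\x\in\RR^N$ has a one-block support of length $m<N$, the vector $\y$ has a reflected block support and satisfies \eqref{eq:suppose} by assumption, so Algorithm \ref{alg:dft} correctly reconstructs $\y$, and the first half $\y_{(0)}=(y_k)_{k=0}^{N-1}$ equals $\x$ by construction \eqref{eq:y}.

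Next I would count samples. By Theorem \ref{thm:runtime}, Algorithm \ref{alg:dft} uses $\mathcal{O}(m\log\tfrac{2N}{m})$ samples of $\widehat{\y}$. The translation through \eqref{converse} is injective in the following sense: every queried $\widehat{y}_k$ requires at most one sample of $\cxx$ (none if $k=N$). Therefore Algorithm \ref{alg:dct} queries at most $\mathcal{O}(m\log\tfrac{2N}{m})$ samples of $\cxx$, which establishes the claimed sampling bound.

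Finally, the runtime: by Theorem \ref{thm:runtime} the call to Algorithm \ref{alg:dft} in line \ref{line:apply} costs $\mathcal{O}(m\log m\log\tfrac{2N}{m})$ operations. The preprocessing in line \ref{line:init_dct} contributes $\mathcal{O}(1)$ work per requested sample, hence $\mathcal{O}(m\log\tfrac{2N}{m})$ in total, which is absorbed into the dominant term. Reading out $\x$ in line \ref{line:output} requires only accessing the $\mathcal{O}(m)$ nonzero entries (the remaining entries in $\y_{(0)}$ are known to be zero from the reconstructed support). Adding these contributions gives an overall runtime of $\mathcal{O}(m\log m\log\tfrac{2N}{m})$, completing the proof. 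There is no real obstacle here; the only subtlety is checking that reading out $\x$ does not accidentally cost $\Theta(N)$, which is avoided because Algorithm \ref{alg:dft} returns $\y$ together with its support set, and we only need to copy the entries inside the support that lie in $\{0,\dotsc,N-1\}$.
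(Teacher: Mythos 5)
Your proposal is correct and follows essentially the same route as the paper's own proof: both reduce the claim to Theorem \ref{thm:runtime} and use Lemma \ref{lem:dctx_from_yhat} to observe that each required sample $\widehat{y}_k$ costs one sample of $\cxx$ and $\mathcal{O}(1)$ arithmetic, so the $\mathcal{O}\left(m\log\frac{2N}{m}\right)$ conversion work is absorbed into the dominant $\mathcal{O}\left(m\log m\log\frac{2N}{m}\right)$ term. Your extra remark about reading out $\x$ via its support set is a sensible detail the paper leaves implicit, but it does not change the argument.
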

\begin{proof}
As shown in Theorem \ref{thm:runtime}, Algorithm \ref{alg:dft} requires $\mathcal{O}\left(m\log\frac{2N}{m}\right)$ samples of $\widehat{\y}$ in lines \ref{line:init}, \ref{line:oneblock_direct}, \ref{line:oneblock_short0} and \ref{line:non-zero}, so we also need $\mathcal{O}\left(m\log\frac{2N}{m}\right)$ samples of $\cxx$ in order to compute the necessary samples of $\widehat{\y}$ in line \ref{line:init_dct} of Algorithm \ref{alg:dct} by Lemma \ref{lem:dctx_from_yhat}. Line \ref{line:init_dct} requires $\mathcal{O}\left(m\log\frac{2N}{m}\right)$ operations; hence the runtime of Algorithm \ref{alg:dct} is governed by the runtime of Algorithm \ref{alg:dft}.
\end{proof}

\section{Numerical results}\label{sec:numerics}
\subsection{Numerical Results for the Reflected Block Support FFT}\label{sec:numerics_dft}
In the following section we present some test results regarding the runtime of Algorithm \ref{alg:dft} and its robustness to noise. We compare our method to Algorithm 2.3 in \cite{plonka_sparse} and to \textsc{Matlab}'s \texttt{ifft} routine, which is a fast and highly optimized implementation of the fast inverse Fourier transform, based on the FFTW library, see \cite{fftw,matlab_ifft,matlab_fft}. The former two algorithms have been implemented in \textsc{Matlab}, and the code is freely available in \cite{nam_code_refl_dft,nam_code_msparse}. 
The sparse FFT algorithm in \cite{plonka_sparse} is suitable for the fast reconstruction of an $m$-sparse vector $\y$ from its DFT $\widehat{\y}$ for small $m$. Note that neither of the algorithms requires a priori knowledge of the length of the support blocks or the sparsity, but Algorithm \ref{alg:dft} needs that $\y$ satisfies (\ref{eq:suppose}).

Figure \ref{fig:runtime} shows the average runtimes of Algorithm \ref{alg:dft} with threshold $\eps=10^{-4}$, Algorithm 2.3 in \cite{plonka_sparse}, in the variant using Algorithm 4.5, and \texttt{ifft} applied to $\widehat{\y}$ for 100 randomly generated  vectors $\y$ of length $2N=2^{21}$ with reflected block support of lengths varying between 5 and 50000. The nonzero entries of the vectors are chosen between 0 and 10. For each vector at most $\lfloor (m-2)/2 \rfloor$ entries in the first support block, excluding the first and last one, are randomly set to 0, and the second half of $\y$ is determined by its symmetry $\y = \J_{2N} \y$.
\begin{figure}[!ht]
\centering
\input{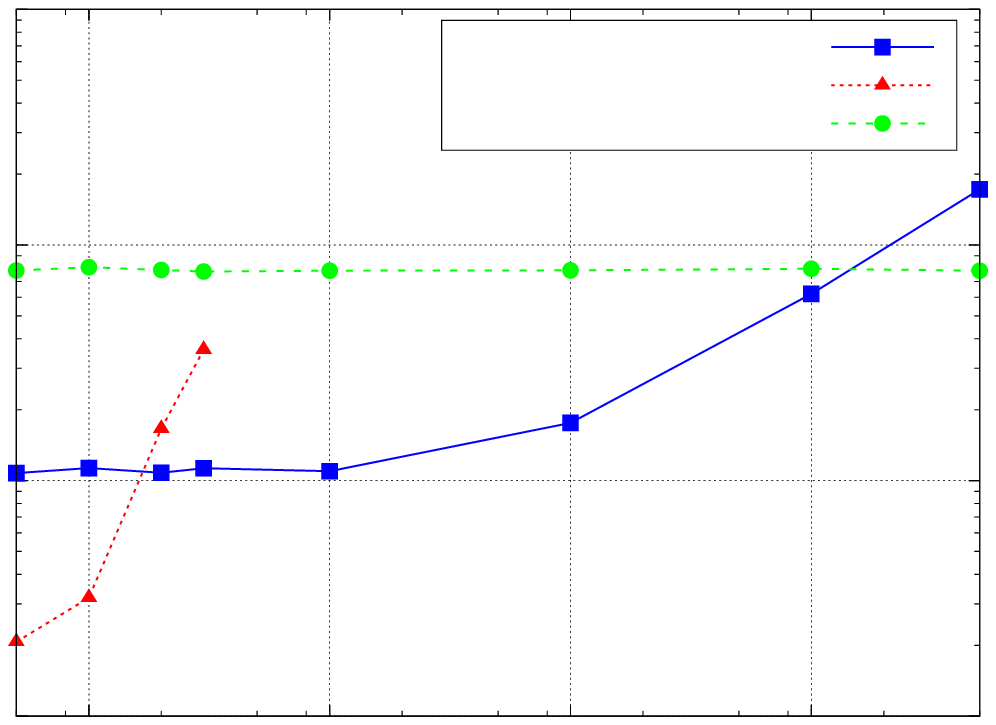}
\caption{Average runtimes of Algorithm \ref{alg:dft} with $\eps=10^{-4}$, Algorithm 2.3 (using Algorithm 4.5) in \cite{plonka_sparse} and \textsc{Matlab}'s \texttt{ifft} for 100 random input vectors with reflected block support of length $m$ and vector length $2N=2^{21}$.} 
\label{fig:runtime}
\end{figure}
Algorithm 2.3 in \cite{plonka_sparse} is very unstable for greater sparsities $2m$, as it often has to solve a close to singular equation system. Hence, we decided to only measure its runtime for block lengths up to $m=30$. Obviously, any comparison to the highly optimized \texttt{ifft} routine must be flawed; however, we can see that both Algorithm \ref{alg:dft} and Algorithm 2.3 in \cite{plonka_sparse} are much faster than \texttt{ifft} for sufficiently small block lengths. The former algorithm achieves faster runtimes for block lengths up to $m=10000$, while the latter does so at least for block lengths up to $m=30$. % equalling a sparsity of $2m=60$. 
Note that by setting at most $\lfloor(m-2)/2\rfloor\cdot 2$ entries inside the support blocks randomly to 0, the actual sparsity of $\y$ can be almost as low as $m$. This barely affects the runtime of Algorithm \ref{alg:dft}, but it decreases the average runtime of Algorithm 2.3 in \cite{plonka_sparse}. As can be seen from Table \ref{tab:error_ex} presenting the average reconstruction errors for exact data, Algorithm 2.3 in \cite{plonka_sparse} is not accurate for block lengths of $m=20$ or greater, and, as we found out during the experiments, not even consistently accurate for block lengths up to $m=10$. 
\begin{table}[!ht]
\begin{center}
\begin{tabular}{rccc}\toprule
$m$	& Algorithm \ref{alg:dft}	& Algorithm 2.3 in \cite{plonka_sparse}	& \texttt{ifft} \\ \midrule
5	& $4.2\cdot 10^{-20}$	& $\;\ 3.4\cdot 10^{-8}$	& $3.8\cdot 10^{-21}$ \\
10 	& $8.0\cdot 10^{-20}$	& $1.4\cdot 10^{0}$	& $4.8\cdot 10^{-21}$ \\
20	& $2.2\cdot 10^{-19}$	& $3.1\cdot 10^{7}$	& $7.0\cdot 10^{-21}$ \\
30	& $6.6\cdot 10^{-19}$	& $3.9\cdot 10^{8}$	& $8.3\cdot 10^{-21}$ \\
100	& $1.5\cdot 10^{-18}$	& $-$	& $1.5\cdot 10^{-20}$ \\
1000	& $7.7\cdot 10^{-14}$	& $-$	& $4.7\cdot 10^{-20}$ \\
10000	& $3.6\cdot 10^{-12}$	& $-$	& $1.5\cdot 10^{-19}$ \\
50000	& $1.3\cdot 10^{-11}$	& $-$	& $3.5\cdot 10^{-19}$ \\  \bottomrule 
\end{tabular}
\caption{Reconstruction errors for the three DFT algorithms for exact data.}
\label{tab:error_ex}
\end{center}
\end{table}
Still, for block lengths up to $m=10$, this procedure is faster than Algorithm \ref{alg:dft}. For block lengths up to $m=100$ Algorithm \ref{alg:dft} achieves an accuracy close to that of \texttt{ifft}, and even for $m=50000$ its reconstruction error is small.

Next we examine the robustness of the algorithms for noisy data. Since Algorithm 2.3 in \cite{plonka_sparse} is not suitable for noisy data due to ill-conditioned equation systems having to be solved, we will only consider Algorithm \ref{alg:dft} and \textsc{Matlab}'s \texttt{ifft} hereafter. Disturbed Fourier data $\widehat{\z}\in\CC^{2N}$ is created by adding uniform noise $\bfeta\in\CC^{2N}$ to the given data $\widehat{\y}$,
\[
 \widehat{\z}\coloneqq\widehat{\y}+\bfeta.
\]
We measure the noise with the SNR value,
\[ 
 \text{SNR}\coloneqq20\cdot\log_{10}\frac{\left\|\widehat{\y}\right\|_2}{\left\|\bfeta\right\|_2}.
\]
Figures \ref{fig:error_100} and \ref{fig:error_1000} depict the average reconstruction errors $\|\y-\y'\|_2/(2N)$ for block lengths $m=100$ and $m=1000$, where $\y$ denotes the original vector and $\y'$ the reconstruction by the corresponding algorithm applied to $\widehat{\z}$.  Note that for noisy data the resulting vector  $\y'$ does no longer have an exact reflected block support, but the support blocks have entries that are significantly greater than the noise and can thus be found  by the support detection procedures presented in Section \ref{sec:detection}.
\begin{figure}[!tbp]
  \centering
  \subfloat[Reconstruction error for $m=100$.]{\resizebox{0.49\textwidth}{!}{\input{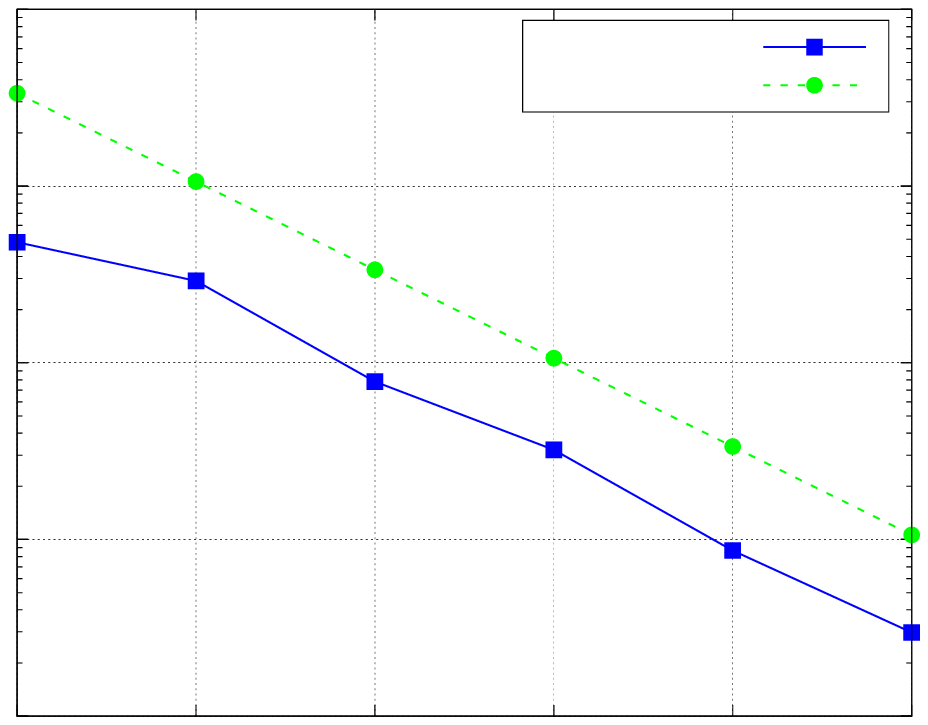}}\label{fig:error_100}}
  \hfill
  \subfloat[Reconstruction error for $m=1000$.]{\resizebox{0.49\textwidth}{!}{\input{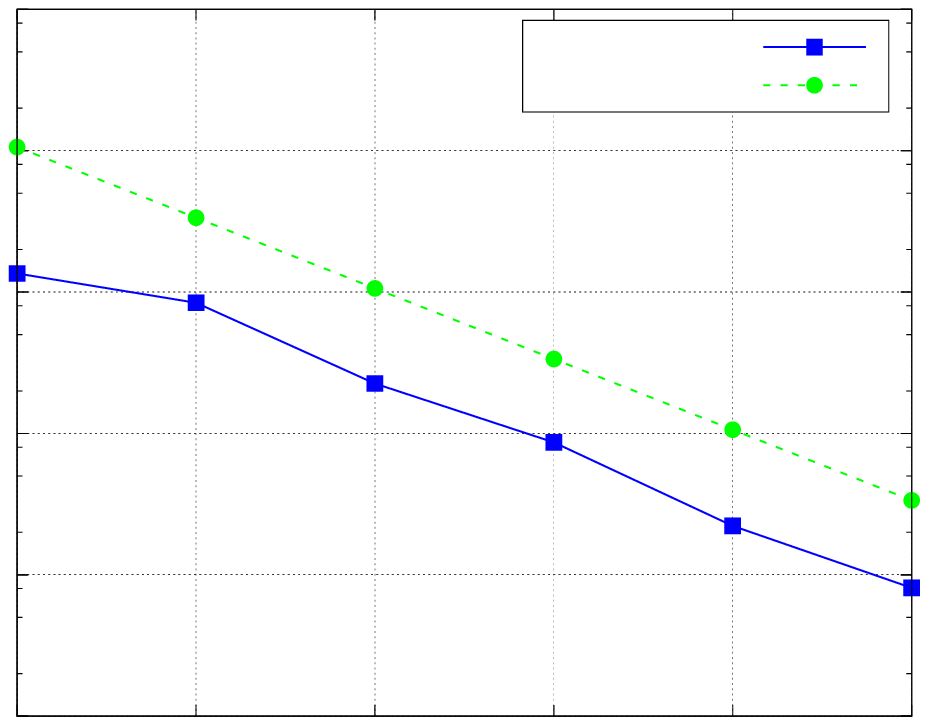}}\label{fig:error_1000}}
  \caption{Average reconstruction errors $\|\y-\y'\|_2/(2N)$ of Algorithm \ref{alg:dft} and \texttt{ifft} for 100 random input vectors with support length $m$ and vector length $2N=2^{21}$.}
\end{figure}
The threshold $\varepsilon$ for Algorithm \ref{alg:dft} is chosen according to Table \ref{tab:epsilon}.
\begin{table}[!ht]
\begin{center}
\begin{tabular}{ccccccc}\toprule
 SNR	& 0 & 10 & 20 & 30 & 40 & 50  \\ \midrule
 $\varepsilon$	& 1.7 & 1.2 & 0.4 & 0.19 & 0.05 & 0.02 \\  \bottomrule 
\end{tabular}
\caption{Threshold $\varepsilon$ for Algorithm \ref{alg:dft}.}
\label{tab:epsilon}
\end{center}
\end{table}
These values were found via an attempt to minimize the approximation error and maximize the rate of correct recovery. Both for $m=100$ and $m=1000$ we see that the reconstruction by Algorithm \ref{alg:dft} yields a smaller error than the one by \texttt{ifft} for all considered noise levels. 

Since for vectors with reflected block support the structure is especially important, we also examine whether Algorithm \ref{alg:dft} can correctly identify the support blocks of $\y$ for noisy input data. 
Especially  for high noise levels, Algorithm \ref{alg:dft} tends to overestimate  the true length of the support blocks. Table \ref{tab:error_dft} shows the rates of correct recovery of the support. 
\begin{table}[!ht]
\begin{center}
\begin{tabular}{ccccc}\toprule
 & \multicolumn{4}{c}{Rate of Correct Recovery in \% Using Algorithm \ref{alg:dft} for} \\ 
 \multirow{2}{*}{SNR}	& $m=100$ & $m=100$ & $m=1000$ & $m=1000$ \\
 & & $m'\leq3m=300$ & & $m'\leq3m=3000$ \\ \midrule
 0	& 70	& 49	& 69	& 47	\\
10	& 70	& 70	& 74	& 68	\\
20	& 86	& 83	& 93	& 85 	\\
30	& 98	& 98	& 94	& 93 	\\
40	& 99	& 98	& 97	& 93	\\
50	& 100	& 100	& 99	& 98	\\ \bottomrule 
\end{tabular}
\caption{Rate of correct recovery of the support of $\y$ in percent for Algorithm \ref{alg:dft}, without bounding $m'$ and with $m'\leq3m$, for the 100 random input vectors with block length $m=100$ and $m=1000$ from Figures \ref{fig:error_100} and \ref{fig:error_1000}.}
\label{tab:error_dft}
\end{center}
\end{table}
In the second and fourth column we present the rate of correct recovery where we consider $\y$ to be correctly recovered by $\y'$ if the support of the two blocks of the original vector $\y$ is contained in the support blocks found by the algorithm. In the 
third and fifth column  we additionally require that the block length $m'$ found by Algorithm \ref{alg:dft} satisfies  $m'\leq 3m$. 

For SNR values of 20 or more the algorithm has a very high rate of correct recovery in the sense that the original support is contained in the reconstructed one. For these noise levels the block length of $\y'$ is almost always at most $3m$. 

\subsection{Numerical Results for the Fast One-block Support DCT}
In this section we present the results of numerical experiments regarding runtime and the robustness to noise of Algorithm \ref{alg:dct}. As, to the best of our knowledge, there exist no other sparse DCT algorithms, we only compare our method to \textsc{Matlab}'s \texttt{idct} routine, which is contained in the \emph{Signal Processing Toolbox}, see \cite{matlab_idct}. \texttt{idct} is a fast and highly optimized implementation of the fast inverse cosine transform of type II. Our algorithm has been implemented in \textsc{Matlab}, and the code is freely available in \cite{nam_code_dct}. Note that Algorithm \ref{alg:dct} also does not require any a priori knowledge of the support length but needs that $\y=(\x^T,(\J_N\x)^T)^T$ satisfies (\ref{eq:suppose}).

In Figure \ref{fig:runtime_dct} one can see the average runtimes of Algorithm \ref{alg:dct} with threshold $\eps=10^{-4}$ and \texttt{idct} applied to $\cxx$ for 100 randomly generated $2^{20}$-length vectors $\x$ with one-block support of lengths varying between 10 and 50000. As in Section \ref{sec:numerics_dft}, the entries of the input vectors are between 0 and 10, and at most $\lfloor(m-2)/2\rfloor$ entries are randomly set to 0.
\begin{figure}[!ht]
\centering
\input{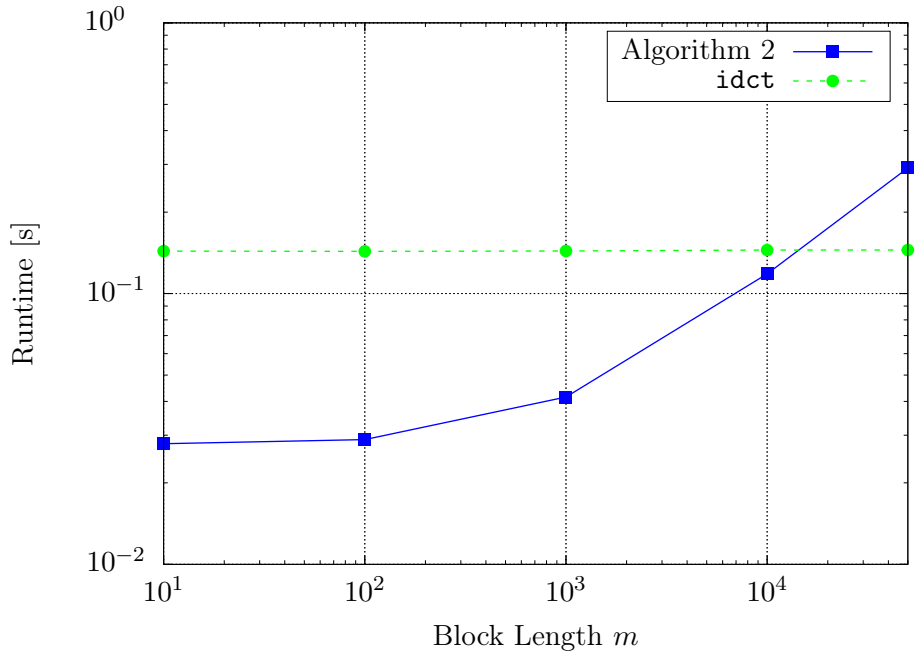}
\caption{Average runtimes of Algorithm \ref{alg:dct} with $\eps=10^{-4}$ and \textsc{Matlab}'s \texttt{idct} for 100 random input vectors with support of length $m$ and vector length $N=2^{20}$.} 
\label{fig:runtime_dct}
\end{figure}
Again, comparing Algorithm \ref{alg:dct} to the highly optimized \texttt{idct} routine is flawed, but, at least for block lengths up to $m=10000$, our method is faster than \texttt{idct} and, as shown in Table \ref{tab:error_ex_dct}, achieves reconstruction errors comparable to those of \texttt{idct} for block lengths up $m=100$ and still a very high accuracy for greater block lengths. 
\begin{table}[!ht]
\begin{center}
\begin{tabular}{rcc}\toprule
$m$	& Algorithm \ref{alg:dct}	& \texttt{idct} \\ \midrule
10	& $9.6\cdot 10^{-20}$	& $7.1\cdot 10^{-21}$ \\
100	& $4.7\cdot 10^{-18}$	& $2.2\cdot 10^{-20}$ \\
1000 	& $1.4\cdot 10^{-16}$	& $6.9\cdot 10^{-20}$ \\
10000 	& $2.5\cdot 10^{-12}$	& $2.2\cdot 10^{-19}$ \\
50000 	& $1.7\cdot 10^{-11}$	& $4.9\cdot 10^{-19}$ \\  \bottomrule 
\end{tabular}
\caption{Reconstruction errors for Algorithm \ref{alg:dct} and \texttt{idct} for exact data.}
\label{tab:error_ex_dct}
\end{center}
\end{table}

We also investigate the robustness of Algorithm \ref{alg:dct} for noisy data. As before we create disturbed cosine data $\z^{\widehat{\mathrm{II}}}\in\RR^{N}$ by adding uniform noise $\bfeta\in\RR^{N}$ to the given data $\cxx$,
\[
 \z^{\widehat{\mathrm{II}}}\coloneqq\cxx+\bfeta.
\]
Then the SNR is given by
\[ 
 \text{SNR}\coloneqq20\cdot\log_{10}\frac{\left\|\cxx\right\|_2}{\left\|\bfeta\right\|_2}.
\]
In Figures \ref{fig:error_dct_100} and \ref{fig:error_dct_1000} one can see the average reconstruction errors 
$\left\|\x-\x'\right\|_2/N$,
where $\x$ denotes the original vector and $\x'$ the reconstruction by the corresponding algorithm applied to $\z^{\widehat{\mathrm{II}}}$ for block lengths $m=100$ and $m=1000$. 
\begin{figure}[!tbp]
  \centering
  \subfloat[Reconstruction error for $m=100$.]{\resizebox{0.49\textwidth}{!}{\input{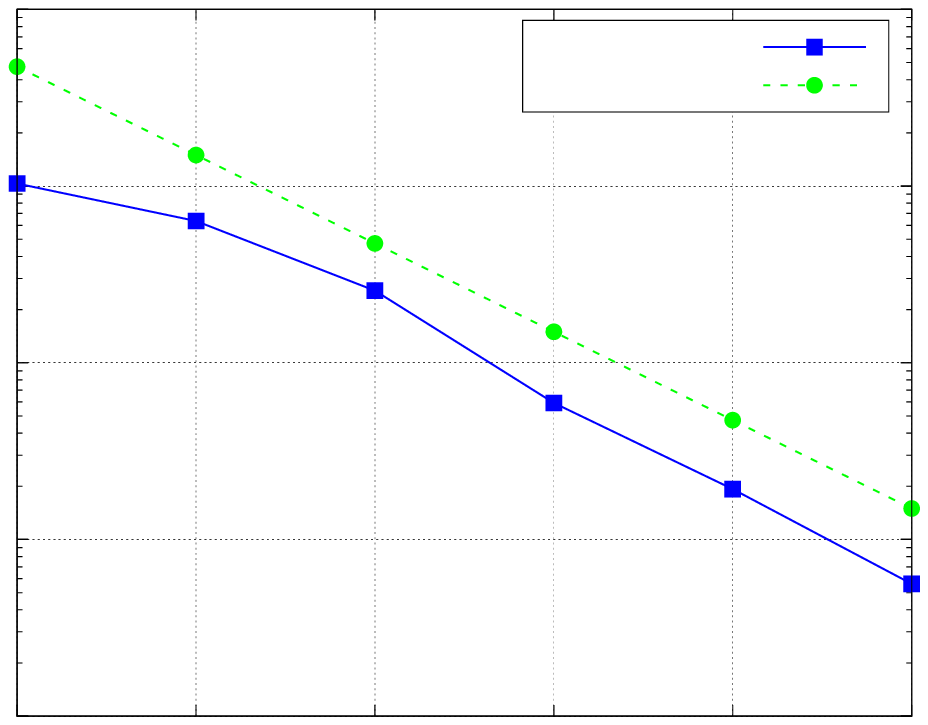}}\label{fig:error_dct_100}}
  \hfill
  \subfloat[Reconstruction error for $m=1000$.]{\resizebox{0.49\textwidth}{!}{\input{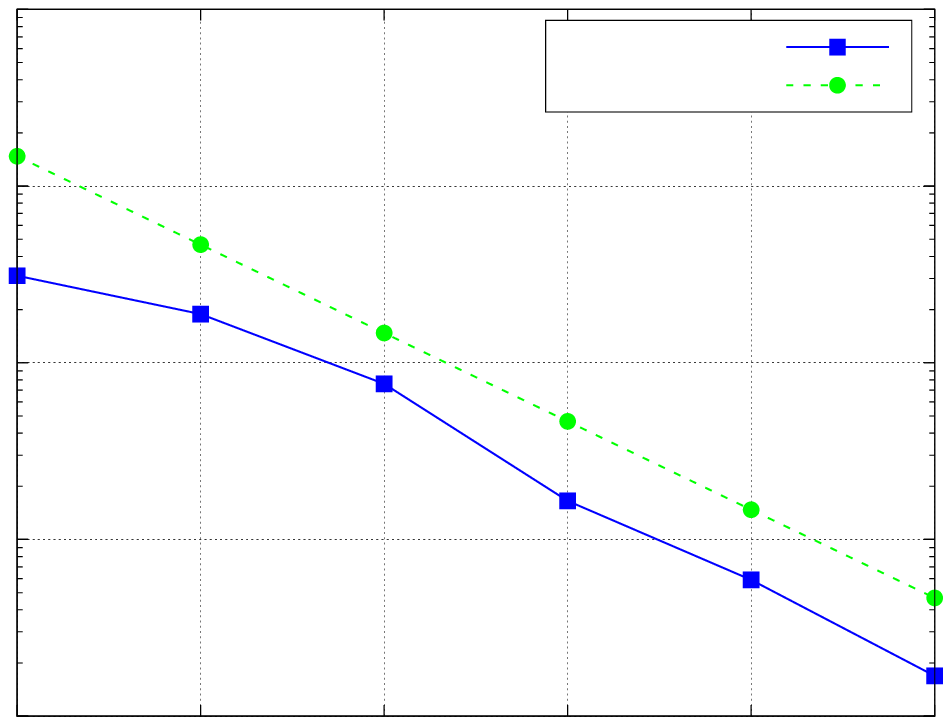}}\label{fig:error_dct_1000}}
  \caption{Average reconstruction errors $\|\x-\x'\|_2/N$ of Algorithm \ref{alg:dct} and \texttt{idct} for 100 random input vectors with support length $m$ and vector length $N=2^{20}$.}
\end{figure}
The threshold parameter $\varepsilon$ for Algorithm \ref{alg:dct} is chosen according to Table \ref{tab:epsilon_dct}.
\begin{table}[!ht]
\begin{center}
\begin{tabular}{ccccccc}\toprule
 SNR	& 0 & 10 & 20 & 30 & 40 & 50  \\ \midrule
 $\varepsilon$	& 2.5 & 1.8 & 1 & 0.3 & 0.15 & 0.05 \\  \bottomrule 
\end{tabular}
\caption{Threshold $\varepsilon$ for Algorithm \ref{alg:dct}.}
\label{tab:epsilon_dct}
\end{center}
\end{table}
Note that we have to increase the threshold parameters slightly compared to those from Table \ref{tab:epsilon} for Algorithm \ref{alg:dft} in order to achieve similar rates of correct recovery, due to the instability caused by the computation of $\widehat{\y}$ from the given noisy data $\z^{\widehat{\mathrm{II}}}$. For both support lengths Algorithm \ref{alg:dct} reconstructs the original vector better than \texttt{idct} for all considered noise levels. Further, we also investigate the rate of correct recovery of the support block; these results can be found in Table \ref{tab:error_dct}. 
\begin{table}[!ht]
\begin{center}
\begin{tabular}{ccccc}\toprule
 & \multicolumn{4}{c}{Rate of Correct Recovery in \% Using Algorithm \ref{alg:dct} for} \\ 
 \multirow{2}{*}{SNR}	& $m=100$ & $m=100$ & $m=1000$ & $m=1000$ \\
 & & $m'\leq3m=300$ & & $m'\leq3m=3000$ \\ \midrule
 0	& 48	& 45	& 48	& 41	\\
10	& 57	& 57	& 67	& 66	\\
20	& 79	& 79	& 81	& 81 	\\
30	& 93	& 93	& 94	& 94 	\\
40	& 96	& 96	& 97	& 97	\\
50	& 99	& 99	& 99	& 99	\\ \bottomrule 
\end{tabular}
\caption{Rate of correct recovery of the support of $\x$ in percent for Algorithm \ref{alg:dct}, without bounding $m'$ and with $m'\leq3m$, for the 100 random input vectors with support length $m=100$ and $m=1000$ from Figures \ref{fig:error_dct_100} and \ref{fig:error_dct_1000}.}
\label{tab:error_dct}
\end{center}
\end{table}
In the second and fourth column we consider $\x$ to be correctly reconstructed by $\x'$ if the support of the original vector $\x$ is contained in the $m'$-length support of $\x'$ found by Algorithm  \ref{alg:dct}, and in the third and fifth column we additionally require  that $m'\leq 3m$. Algorithm \ref{alg:dct} has a very high rate of correct recovery for SNR values of 20 and greater, and, at least in our experiments for those noise levels, the support length of $\x'$ is always at most $3m$.
\section*{Acknowledgement}
The authors gratefully acknowledge partial support for this work by the DFG in the framework of the GRK 2088.

{\small
\bibliographystyle{abbrv}
\bibliography{library.bib}
% \newpage
% }
% \thispagestyle{empty}

%\listoftodos

\end{document}